\documentclass[11pt]{article}

\usepackage{amssymb, amsmath, amsthm, graphicx}
\usepackage[unicode]{hyperref}
\usepackage[left=1in,top=1in,right=1in]{geometry}
\usepackage[textsize=small]{todonotes}
\usepackage{comment}
\usepackage[capitalize]{cleveref}
\usepackage{color}
\allowdisplaybreaks

\date{}

\theoremstyle{plain}
      \newtheorem{theorem}{Theorem}[section]
      \newtheorem{lemma}[theorem]{Lemma}
            \newtheorem{claim}[theorem]{Claim}

      \newtheorem{corollary}[theorem]{Corollary}
      
      \newtheorem{conjecture}[theorem]{Conjecture}
      \newtheorem*{thm:main1}{Theorem \ref{thm:main1}}
\theoremstyle{definition}

\theoremstyle{remark}

	\newcommand{\PP}{{\mathbb P}}
	\newcommand{\EE}{{\mathbb E}}
	\newcommand{\II}{{\mathbb I}}

\newcommand{\Var}{\operatorname{Var}}

\title{On random irregular subgraphs}
\author{Jacob Fox\thanks{Department of Mathematics, Stanford University, Stanford, CA 94305. Email: {\tt jacobfox@stanford.edu}. Research supported by a Packard Fellowship and by NSF Awards DMS-1800053 and DMS-2154169.} \and Sammy Luo\thanks{Department of Mathematics, Stanford University, Stanford, CA 94305. Email: {\tt sammyluo@stanford.edu}. Research supported by NSF GRFP Grant DGE-1656518.} \and Huy Tuan Pham\thanks{Department of Mathematics, Stanford University, Stanford, CA 94305. Email: {\tt huypham@stanford.edu}. Research supported by a Two Sigma Fellowship.}}

\date{}

\begin{document}

\maketitle

\begin{abstract}
Let $G$ be a $d$-regular graph on $n$ vertices. Frieze, Gould, Karo\'nski and Pfender began the study of the following random spanning subgraph model $H=H(G)$. Assign independently to each vertex $v$ of $G$ 
a uniform random number $x(v) \in [0,1]$, and an edge $(u,v)$ of $G$ is an edge of $H$ if and only if $x(u)+x(v) \geq 1$. Addressing a problem of Alon and Wei, 
we prove that if $d = o(n/(\log n)^{12})$, then with high probability, for each nonnegative integer $k \leq d$, there are $(1+o(1))n/(d+1)$ vertices of degree $k$ in $H$.  
\end{abstract}

\section{Introduction}

Given a graph $H$ and a nonnegative integer $k$, let $m(H,k)$ be the number of vertices in $H$ with degree $k$, and let $m(H)=\max_k m(H,k)$. It is well-known that in any finite simple graph $H$, there is some pair of vertices with the same degree, that is, $m(H)\geq 2$. Alon and Wei \cite{AW} initiated the study of the following question: Given a $d$-regular graph $G$ on $n$ vertices, can we find a spanning subgraph $H$ of $G$ such that $m(H)$ is as small as possible? Since such a subgraph $H$ has maximum degree at most $d$, we clearly have $m(H)\geq \frac{n}{d+1}$. Alon and Wei showed that this lower bound is in fact nearly optimal by proving that every $d$-regular graph $G$ has a spanning subgraph $H$ satisfying $m(H)\leq (1+o_n(1))\frac{n}{d+1}+2$ (in fact, it suffices to assume that $G$ has minimum degree $d$). They conjectured that for regular graphs, one can guarantee a stronger conclusion, that $m(H,k)$ is close to $\frac{n}{d+1}$ for all $0\le k\le d$. 
\begin{conjecture}[{\cite{AW}, Conjecture~1.1}]\label{conj:AW1}
Every $d$-regular graph $G$ on $n$ vertices contains a spanning subgraph $H$ such that 
\begin{equation*}
    \big | m(H,k)-\frac{n}{d+1}\big | \leq 2 \qquad \text{for all } 0\leq k\leq d.
\end{equation*}
\end{conjecture}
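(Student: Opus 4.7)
I would attack \Cref{conj:AW1} in the approximate form suggested by the abstract, via the random spanning subgraph $H = H(G)$ obtained by assigning i.i.d.\ uniforms $x(v)\in[0,1]$ and keeping an edge $uv$ precisely when $x(u) + x(v) \geq 1$. The plan is to show that with high probability $m(H,k) = (1+o(1))\,n/(d+1)$ simultaneously for every $0 \leq k \leq d$; this yields the conjecture up to a $(1+o(1))$ multiplicative factor rather than the claimed additive $2$, and in particular produces a spanning subgraph $H$ of $G$ with the required near-uniform degree profile.

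The first moment is immediate: conditional on $x(v)=t$, each of $v$'s $d$ neighbors is independently in $H$ with probability $t$, so $d_H(v)\mid x(v)=t \sim \mathrm{Bin}(d,t)$, and
\[
\Pr[d_H(v)=k] \;=\; \binom{d}{k}\int_0^1 t^k(1-t)^{d-k}\,dt \;=\; \frac{1}{d+1}
\]
by the Beta identity. Hence $\E[m(H,k)] = n/(d+1)$ exactly for every $k$, and the substance of the proof is concentration.

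Observe that $\II\{d_H(v)=k\}$ depends only on $x(v)$ and on $(x(u))_{u\in N(v)}$, so for $u,w$ at graph distance greater than $2$ in $G$ the two indicators are independent. Only pairs at distance $\leq 2$ (of which there are $O(nd^2)$) contribute to $\Var(m(H,k))$; a careful conditional computation on $(x(u),x(w))$ and on the values at common neighbors should yield a per-pair covariance bound of $O(1/d^2)$, hence $\Var(m(H,k)) = O(n)$. Chebyshev together with a union bound over the $d+1$ values of $k$ then handles the range $d = o(\sqrt n)$, which is well short of the target.

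Pushing to $d = o(n/(\log n)^{12})$ is the hard part. The functional $m(H,k)$ is a function of the $n$ i.i.d.\ uniforms $\{x(v)\}$ with worst-case bounded differences $d+1$ (changing one $x(v)$ can only flip $\II\{d_H(u)=k\}$ for $u \in N(v)\cup\{v\}$), so naive Azuma only reaches $d = o(n^{1/4})$. The \emph{typical} influence is much smaller, however: only the $\approx n/d$ vertices $u$ with $d_H(u)\in\{k-1,k,k+1\}$ can possibly have their indicator flipped by a resampling of a single $x(v)$. I would try to exploit this either by conditioning on a high-probability ``typicality'' event on which all local neighborhoods look balanced and then invoking Talagrand's inequality, or by a Freedman-type martingale bound with a refined quadratic variation estimate that replaces the worst-case $d+1$ by an effective $O(1)$ on most vertices. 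The main obstacle is making this ``typical influence'' bound uniform in $k$ and robust enough to survive a union bound over $k$ and over the auxiliary regularity event; this is presumably where the twelve log factors in the theorem come from. Finally, bridging from the asymptotic conclusion to the additive $2$ in \Cref{conj:AW1} would require a deterministic local correction step, which seems to lie beyond the purely probabilistic approach outlined here.
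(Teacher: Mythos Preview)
The statement you are asked to prove is \emph{Conjecture}~\ref{conj:AW1}, and the paper does not prove it; it is explicitly left open. What the paper proves is the weaker asymptotic statement $m(H,k)=(1+o(1))n/(d+1)$ for all $k$ when $d=o(n/(\log n)^{12})$ (Theorem~\ref{thm:main}). Your proposal is, by your own admission, an attack on this asymptotic version rather than on the conjecture itself, and you correctly note at the end that the additive $\pm 2$ would require an additional deterministic correction step that lies outside your argument. So as a proof of the stated conjecture there is a genuine and acknowledged gap: nothing in your outline, and nothing in the paper, bridges from $(1+o(1))n/(d+1)$ to the sharp $|m(H,k)-n/(d+1)|\le 2$.

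Setting that aside, your sketch of the asymptotic result is essentially the paper's own approach. The first-moment computation via the Beta integral matches the paper's rank argument. For concentration, the paper does exactly what you suggest in your final paragraph: it applies a Freedman/Bernstein martingale inequality (Lemma~\ref{lem:bernstein}) to the Doob martingale of $m(H,k)$ with respect to the filtration revealing the $x(v)$ one at a time, and the bulk of the work is a ``refined quadratic variation estimate'' (Lemma~\ref{lem:var-proxy}) bounding $M_n=\sum_j \E[Y_j^2\mid\mathcal F_{j-1}]$ by $O((\log n)^{11}n/d)$, together with a bound $|Y_j|=O(\log n)$ with high probability (Lemma~\ref{lem:sup-Yi}). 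Your intuition that the typical influence is $O(1)$ rather than $d+1$, and that the log factors arise from making this uniform over $k$ and over the regularity events, is exactly right. One minor inaccuracy: with your stated variance bound $\Var(m(H,k))=O(n)$, Chebyshev plus the union bound over $k$ only reaches $d=o(n^{1/3})$, not $d=o(n^{1/2})$; the paper obtains the sharper $\Var(m(H,k))\le 17n/(d+1)$ (Theorem~\ref{thm:varX}) by tracking the codegree dependence in the pair covariance, which is what buys the extra factor.
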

Loosely speaking, we describe a subgraph $H$ as \emph{irregular} when each $m(H,k)$ is close to $\frac{n}{d+1}$, as the degrees of its vertices are close to as uniformly distributed as possible. While the argument of Alon and Wei provides a nearly optimal upper bound to $m(H)$, it does not yield even the following weaker asymptotic version of Conjecture \ref{conj:AW1} that they ask.
\begin{conjecture}[\cite{AW}]\label{conj:AW2}
Let $d=o(n)$. Then every $d$-regular graph $G$ on $n$ vertices contains a spanning subgraph $H$ such that 
\begin{equation}\label{eqn:asymp}\tag{$*$}
    m(H,k)=(1+o(1))\frac{n}{d+1} \qquad \text{for all } 0\leq k\leq d.
\end{equation}
\end{conjecture}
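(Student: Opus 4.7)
My plan is to leverage the random subgraph model $H = H(G)$ to obtain a probabilistic proof of Conjecture~\ref{conj:AW2}, at least in a substantial range of $d$. The approach has three main parts: (i) compute $\EE[m(H,k)]$ exactly; (ii) bound covariances between the indicators $X_v = \II[\deg_H(v)=k]$ at nearby vertices; (iii) upgrade the resulting variance control to Chernoff-type concentration and union-bound over $k$.

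For the mean, condition on $x(v) = t$: each of the $d$ neighbors $u$ of $v$ independently satisfies $x(u) \ge 1-t$ with probability $t$, so $\deg_H(v) \mid x(v)=t$ is $\mathrm{Bin}(d,t)$-distributed. Integrating against the uniform density of $x(v)$,
\[
    \PP[\deg_H(v) = k] \;=\; \int_0^1 \binom{d}{k} t^k (1-t)^{d-k} \, dt \;=\; \frac{1}{d+1}
\]
by the Beta integral, so $\EE[m(H,k)] = n/(d+1)$ by linearity.

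Writing $m(H,k) = \sum_v X_v$, the indicator $X_v$ depends only on $\{x(w) : w \in N(v) \cup \{v\}\}$, so $X_u$ and $X_v$ are independent whenever the graph distance $d_G(u,v) \ge 3$. Thus the dependency graph of the $X_v$'s is contained in the square $G^2$, of maximum degree at most $d^2$. For pairs $u,v$ at distance $1$ or $2$, I would estimate $\mathrm{Cov}(X_u,X_v)$ sharply: conditioning on $x(u)=s$ and $x(v)=t$ and writing $p_s(j) = \binom{d-1}{j} s^j (1-s)^{d-1-j}$, the identity $\binom{d}{k}s^k(1-s)^{d-k} = s\,p_s(k-1) + (1-s)\,p_s(k)$ allows one to express $\mathrm{Cov}(X_u,X_v)$ in terms of integrals weighted by the mean-zero difference $p_s(k-1) - p_s(k) = p_s(k-1)\cdot (k-ds)/(k(1-s))$. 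This difference vanishes at $s = k/d$ and has effective support of width $O(1/\sqrt d)$, producing polynomial savings over the trivial covariance bound $O(1/d)$.

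In the regime $d = \Theta(n/(\log n)^{12})$ the mean $n/(d+1)$ is only polylogarithmic, so Chebyshev is insufficient and one needs exponential concentration. I would use either a Chernoff bound applied to a proper coloring of $G^2$, or more flexibly a martingale concentration inequality (e.g., Freedman's) applied to the Doob martingale of $m(H,k)$ along a carefully chosen ordering of vertices, with the variance proxy supplied by the covariance bounds above. A union bound over $k \in \{0, 1, \ldots, d\}$ then completes the argument. The main technical obstacle is pushing the concentration to the full range $d = o(n/(\log n)^{12})$: a naive partition into the $\approx d^2$ color classes of $G^2$ handles only $d = o((n/\log n)^{1/3})$. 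Closing this gap requires combining the covariance cancellations with a variance-aware concentration inequality, possibly after first bucketing the $x$-values into intervals of width $\sim 1/\sqrt d$ so that within each bucket the degree distribution becomes nearly rigid; the specific $(\log n)^{12}$ barrier then emerges as the combined overhead of these exponents and union bounds.
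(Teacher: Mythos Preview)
Your high-level plan matches the paper's proof of Theorem~\ref{thm:main} (which establishes the conjecture only for $d=o(n/(\log n)^{12})$; the full statement remains open): run the Doob martingale of $m(H,k)$ along a vertex ordering and apply the Freedman/Bernstein martingale inequality, then union-bound over $k$. The cancellation you isolate---that the relevant signal is the difference $p_s(k{-}1)-p_s(k)$, concentrated near $s=k/d$---is exactly the quantity $\delta_k(j,u)$ the paper analyzes.

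The substantive gap is that covariance bounds give you only $\Var(X)=\EE[M_n]$, whereas Freedman's inequality needs a \emph{high-probability} bound on the predictable quadratic variation $M_n=\sum_j \EE[Y_j^2\mid\mathcal{F}_{j-1}]$ itself, together with a high-probability bound on $\max_j|Y_j|$. Neither follows from the covariance estimates you outline. The paper supplies these as two separate lemmas: Lemma~\ref{lem:sup-Yi} shows $\max_j|Y_j|=O(\log n)$ w.h.p.\ by proving that at most $O(\log n)$ neighbors of any fixed vertex can have $H$-degree exactly $k$ (this is where the bucketing of $x$-values near $k/d$ is actually used, with width $\sim\sqrt{(\log n)\max(k,\log n)}/d$ rather than $1/\sqrt d$); and Lemma~\ref{lem:var-proxy} shows $M_n=O((\log n)^{11}n/d)$ w.h.p.\ via a detailed pointwise analysis of $\delta_k(j,u)$ using Stirling's formula, a relative-entropy bound on the binomial exponent, and Chernoff/Hoeffding control over a uniformly \emph{random} vertex ordering (not a ``carefully chosen'' one). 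Your proposal names the right inequality and the right cancellation, but it is precisely the execution of these two lemmas---not the choice of method---that produces the $(\log n)^{12}$ threshold, and that execution is absent from the sketch.
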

While both conjectures remain open, significant progress has been made on Conjecture \ref{conj:AW2} in certain ranges of $d$.
In fact, an earlier work of Frieze, Gould, Karo\'nski and Pfender \cite{Frieze} already gave a method for constructing a random spanning subgraph $H$ satisfying \eqref{eqn:asymp} whenever $d\leq (n/\log n)^{1/4}$. This construction was motivated by their study of the \emph{irregularity strength} $s(G)$ of a graph $G$, defined as the smallest integer $w$ such that one can assign integer weights in $[w]$ to every edge of $G$ so that the sums of the weights incident to each vertex are distinct. Faudree and Lehel conjectured in \cite{FL} that there is some absolute constant $C$ such that $s(G)\leq \frac{n}{d}+C$ whenever $G$ is a $d$-regular graph on $n$ vertices; see \cite{JP, PW1, PW2} for recent progress on this conjecture.

The natural construction considered in \cite{Frieze} is as follows: For each vertex $v \in V(G)$, pick a weight $x(v) \in [0,1]$ uniformly and independently at random. Then, for each edge $(u,v) \in E(G)$, we keep it as an edge of $H$ if and only if $x(u)+x(v) \geq 1$. We will henceforth refer to this construction as the \emph{irregular random subgraph} model. It is easy to check that this model satisfies at least the basic properties one would expect from a construction of a very irregular subgraph. Indeed, fix $k \in \{0,1,\ldots,d\}$ and consider the random variable $X=m(H,k)$. For a vertex $v$, let $\II_v$ be the indicator random variable for the event that $v$ has degree $k$ in $H$. Then, 
$X=\sum_{v \in V(G)}\II_v$. Observe that $v$ has degree $k$ in $H$ if and only if, among the set of $d+1$ numbers consisting of  $1-x(v)$ and the $d$ numbers $x(u)$ with $u$ a neighbor of $v$ in $G$, $1-x(v)$ is the $(k+1)^{th}$ largest. It follows that $v$ has degree $k$ in $H$ with probability $1/(d+1)$. By linearity of expectation, $\mathbb{E}[X]=n/(d+1)$. To give an asymptotically tight bound on $m(H,k)$, it then remains to show that the random variable $X$ is well-concentrated around this mean. The authors of \cite{Frieze} do so via an application of the Azuma-Hoeffding inequality. 
In \cite{AW}, Alon and Wei show a larger range of values of $d$ over which \eqref{eqn:asymp} holds with high probability in this construction by using a more careful concentration argument. They start by equitably partitioning the vertices of $G$ into $O(d^2)$ disjoint sets $V_i$ such that within each $V_i$, each pair of vertices has distance at least $3$, and thus the degrees in $H$ of all vertices in $V_i$ are independent. This allows them to apply Chernoff's inequality to show that \eqref{eqn:asymp} holds with high probability whenever $d=o((n/\log n)^{1/3})$. 

In this paper, we take a more direct approach to studying the concentration properties of this irregular random subgraph model, showing the desired asymptotic irregularity property for an even wider range of values of $d$. First, we take a natural approach of applying the second moment method to study the variance of the random variable $X$. We show that $\Var(X)\leq \frac{17n}{d+1}=17\EE[X]$, implying by Chebyshev's inequality and the union bound that \eqref{eqn:asymp} holds with high probability whenever $d=o(n^{1/2})$. This argument is detailed in Section~\ref{sec:2ndmoment}. However, one might conjecture much stronger concentration bounds to hold due to exponential-type decay in the distribution of $X$. By using a much more technical argument employing the martingale Bernstein inequality, we manage to extend the result to all $d=o(n/(\log n)^{12})$, showing the following main theorem.
\begin{theorem}\label{thm:main}
Let $G$ be a $d$-regular graph on $n$ vertices with $d=o(n/(\log n)^{12})$. Then with high probability, the irregular random subgraph $H=H(G)$ satisfies that for $0\leq k \leq d$, the number of vertices with degree $k$ in $H$ is $(1+o(1))n/(d+1)$.
\end{theorem}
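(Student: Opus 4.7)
The plan is to apply a Bernstein-type martingale concentration inequality (Freedman's inequality) to a vertex-exposure Doob martingale of $X = m(H,k)$. Fix $k \in \{0, 1, \ldots, d\}$ and write $X = \sum_v \II_v$ with $\II_v = \II(\deg_H(v) = k)$, so $\E[X] = n/(d+1)$. With an arbitrary ordering $v_1, \ldots, v_n$ of $V(G)$, set $\mathcal{F}_i = \sigma(x(v_1), \ldots, x(v_i))$ and form the Doob martingale $Y_i = \E[X \mid \mathcal{F}_i]$ with differences $\Delta_i = Y_i - Y_{i-1}$. Freedman's inequality gives
\begin{equation*}
    \Pr\!\bigl(|X - \E X| \ge t,\ W \le V,\ \max_i |\Delta_i| \le R\bigr) \;\le\; 2\exp\!\left(-\tfrac{t^2}{2V + 2Rt/3}\right),
\end{equation*}
where $W = \sum_i \E[\Delta_i^2 \mid \mathcal{F}_{i-1}]$. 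Choosing $t = \eps n/(d+1)$, $R = (\log n)^{O(1)}$, and $V = n(\log n)^{O(1)}/d$, the exponent is $\Omega(n/(d(\log n)^{O(1)}))$, which is $\omega(\log d)$ for $d = o(n/(\log n)^{12})$, so a union bound over $0 \le k \le d$ suffices.

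Bounding the martingale differences: since $\II_v$ depends only on $x(v)$ and $x(w)$ for $w \in N(v)$, revealing $x(v_i)$ affects $\Pr(\II_v = 1 \mid \mathcal{F}_\cdot)$ only for $v \in \{v_i\} \cup N(v_i)$. For a neighbor $v \in N(v_i)$, let $D_v$ be the count of neighbors $w \in N(v)\setminus\{v_i\}$ with $x(v) + x(w) \ge 1$. Splitting the integral over $s = x(v_i)$ at the threshold $s = 1 - x(v)$ where the edge $(v_i, v)$ flips gives
\[
\bigl|\Pr(\II_v = 1 \mid \mathcal{F}_i) - \Pr(\II_v = 1 \mid \mathcal{F}_{i-1})\bigr| \le \Pr(D_v \in \{k-1, k\} \mid \mathcal{F}_{i-1}),
\]
so $|\Delta_i| \le 1 + \sum_{v \in N(v_i)} \Pr(D_v \in \{k-1, k\} \mid \mathcal{F}_{i-1})$. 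Since $\Pr(D_v = j) = 1/d$ unconditionally, the expectation of the sum is $O(1)$; binomial anti-concentration further gives $\Pr(D_v = j \mid \mathcal{F}_{i-1}) = O((m_v + 1)^{-1/2})$ on a large-probability event, where $m_v$ is the number of unexposed neighbors of $v$ other than $v_i$. For the predictable quadratic variation, $\E[W] = \Var(X) = O(n/d)$ from the second-moment bound of Section~\ref{sec:2ndmoment}, and a tail estimate (via higher moments or an iterated martingale argument applied to $W = \sum_i w_i$ itself, noting each $w_i = \E[\Delta_i^2 \mid \mathcal{F}_{i-1}]$ is $\mathcal{F}_{i-1}$-measurable) gives $W \le n(\log n)^{O(1)}/d$ with high probability.

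The main obstacle is realizing the almost-sure polylogarithmic bound on $\max_i |\Delta_i|$: though $\sum_{v \in N(v_i)} \Pr(D_v \in \{k-1,k\} \mid \mathcal{F}_{i-1})$ has bounded expectation, adversarial realizations of $\mathcal{F}_{i-1}$ can force several individual terms close to $1$, e.g.\ when many of $v$'s other neighbors have already been exposed and their $x$-values align to nearly determine $D_v$. The remedy is a truncation: replace $\Delta_i$ by $\Delta_i \II_{E_i}$ on a good $\mathcal{F}_{i-1}$-measurable event $E_i$ ruling out those bad configurations, and carefully bound the bias in $\E[Y_n]$ so that the resulting mean shift remains $o(n/(d+1))$. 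Balancing the polylogarithmic losses in $R$, $V$, and the truncation bookkeeping against the Freedman exponent then determines the $(\log n)^{12}$ slack in the final statement.
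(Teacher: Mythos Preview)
Your framework matches the paper's: a vertex-exposure Doob martingale plus Freedman/Bernstein, with the two inputs being a polylogarithmic bound on $\max_i|\Delta_i|$ and an $n(\log n)^{O(1)}/d$ bound on the predictable quadratic variation $W$. But you have the difficulty calibrated backwards. The paper dispatches $\max_i|\Delta_i|$ in about a page (Lemma~\ref{lem:sup-Yi}) by bounding directly the count $S(u)=\sum_{v\in N(u)}\II_v$ of neighbors of $u$ with $H$-degree $k$: Chernoff forces any such $v$ to have $x(v)$ within $O(\sqrt{\kappa k_+}/d)$ of $k/d$, and a further Chernoff shows at most $O(\log n)$ of them land in the correct subinterval determined by the already-revealed $x(w)$'s. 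No truncation or mean-shift bookkeeping is needed, because the version of Freedman used (Lemma~\ref{lem:bernstein}) already absorbs the bad event $\{\max_i|Y_i|>a\}$ additively.

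The real work, and the genuine gap in your sketch, is the bound on $W=M_n$. You write $\mathbb{E}[W]=\Var(X)=O(n/d)$ and then defer to an unspecified ``tail estimate via higher moments or an iterated martingale argument.'' This is precisely where the paper spends the bulk of Section~\ref{sec:bernstein}, and it does \emph{not} try to concentrate $W$ around its mean. Instead it takes a \emph{uniformly random} vertex ordering (not an arbitrary one as you write), so that the number $t(u,j)$ of unexposed neighbors of $u$ at time $j$ is itself controlled (Claim~\ref{claim:high-prob}), and then bounds each $\mathbb{E}[Y_j^2\mid\mathcal{F}_{j-1}]$ pointwise on a high-probability event via a Stirling/KL-type estimate on $\delta_k(j,u)=p(x(u),t,h)-p(x(u),t,h+1)$ (Claim~\ref{claim:fbound}, Lemma~\ref{lem:x-offrange}, and inequalities~\eqref{eq:stirling}--\eqref{eq:mid}). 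Summing these deterministic-on-$\mathcal{E}$ bounds over $j$ is what produces the $(\log n)^{11}$. Your alternative route---treating $W=\sum_i w_i$ as another adapted sum and applying concentration---faces the difficulty that the $w_i$ can individually be of order $(\log n)^{O(1)}$ and are strongly dependent, so a second martingale layer would itself require a variance-proxy bound of the same type you are trying to establish; it is not clear this bootstraps without essentially redoing the paper's direct computation.
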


In particular, this theorem confirms Conjecture \ref{conj:AW2} for all $d = o(n/(\log n)^{12})$. We conjecture that the irregular random subgraph $H$ satisfies Property \ref{eqn:asymp} with high probability for all $d = o(n/\log n)$. On the other hand, it is likely that for $d=\omega(n/\log n)$, the irregular random subgraph model does not satisfy Property \ref{eqn:asymp} with high probability, and new ideas would be required to show Conjecture \ref{conj:AW2} in the full range.   

The proof of Theorem~\ref{thm:main} is in Section~\ref{sec:bernstein}. Since these two arguments are more or less independent, they can be read in any desired order. 

\section{The second moment argument}
\label{sec:2ndmoment}

In this section, we prove Theorem~\ref{thm:main} in the case $d=o(n^{1/2})$ through a second moment argument. We establish the following bound on the variance of the random variable $X=m(H,k)$.

\begin{theorem}
\label{thm:varX}
Let $G$ be a $d$-regular graph on $n$ vertices, and let $H=H(G)$ be the irregular random subgraph. Then for all $0\leq k\leq d$, the variance of the random variable $X=m(H,k)$ is at most $17\frac{n}{d+1}$.
\end{theorem}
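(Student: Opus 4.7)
The plan is to decompose
\[
\Var(X) = \sum_u \Var(\II_u) + \sum_{u \neq v} \mathrm{Cov}(\II_u, \II_v)
\]
and bound each term according to the graph distance $d_G(u,v)$. The key structural observation is that $\II_v$ is a function only of the $d+1$ independent random variables $x(v)$ and $\{x(w) : w \sim v\}$, so if $d_G(u,v) \geq 3$ the two supporting families are disjoint and $\II_u,\II_v$ are independent, making the covariance vanish. The diagonal contribution is at most $\sum_u \E[\II_u] = n/(d+1)$, so it remains to handle distances $1$ and $2$, where we aim for total contribution $O(n/(d+1))$.

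For $u \sim v$, I would condition on $x(u), x(v)$, and $\{x(w): w \in N(u)\cap N(v)\}$. Given these, $\II_u$ and $\II_v$ are determined by disjoint sets of remaining private-neighbor variables, hence conditionally independent. Writing $\E[\II_u \II_v]$ as iterated integrals of the binomial pmf $q_j(a,m) = \binom{m}{j} a^j (1-a)^{m-j}$ and using the normalization $\int_0^1 q_j(a,m)\,da = 1/(m+1)$, a direct computation should yield $|\mathrm{Cov}(\II_u,\II_v)| = O(1/d^2)$. Summed over the $nd$ ordered adjacent pairs, this contributes $O(n/(d+1))$.

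For $u,v$ at distance $2$ with common neighbors $w_1,\ldots,w_t$, conditioning on $c = (x(w_1),\ldots,x(w_t))$ again yields conditional independence, so $\mathrm{Cov}(\II_u,\II_v) = \mathrm{Cov}(f_u(c), f_v(c))$ with $f_u(c) = \E[\II_u \mid c]$. The key identity is
\[
q_{k-1}(a,m) - q_k(a,m) = \tfrac{1}{m+1}\binom{m+1}{k}\tfrac{d}{da}\bigl[a^k(1-a)^{m+1-k}\bigr],
\]
which integrates to $G_{k-1}(s) - G_k(s) = \tfrac{1}{m+1}\binom{m+1}{k} s^k(1-s)^{m+1-k}$, where $G_j(s) = \int_0^s q_j(a,m)\,da$. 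Applied with $m = d-t$ and telescoped over the sorted values $c_{(1)} \geq \cdots \geq c_{(t)}$, this gives an exact formula for $f_u(c)$ as $1/(d-t+1)$ minus a sum of $t$ scaled binomial pmfs at the order statistics. For $t=1$ this already produces $\Var(f_u) = \tfrac{1}{d^2}\Var(J_u) \leq \tfrac{1}{d^2(d+1)}$ where $J_u(a) = \binom{d}{k}a^k(1-a)^{d-k}$, since $J_u \in [0,1]$ has mean $1/(d+1)$. For general $t$, I would combine this with the universal bound $\Var\bigl(\E[\II_u \mid x(N(u))]\bigr) = d/((d+1)^2(d+2))$ — recognizing the conditional expectation as the $k$-th spacing of $d$ uniform order statistics, a $\mathrm{Beta}(1,d)$ random variable — and run a Hoeffding/ANOVA decomposition over subsets of the common neighbors to deduce $\Var(f_u) = O(t/d^3)$ uniformly in $1 \leq t \leq d$. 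Cauchy--Schwarz together with the count $\sum_{v : d_G(u,v)=2} t_{uv} \leq d(d-1)$ (length-two paths at $u$) then gives a distance-$2$ contribution of $O(n/d)$.

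The main obstacle I expect is the $O(t/d^3)$ bound on $\Var(f_u)$ uniformly in $t$. A naive Cauchy--Schwarz $|\mathrm{Cov}(\II_u,\II_v)| \leq \Var(\II_u)^{1/2}\Var(\II_v)^{1/2} = O(1/(d+1))$ summed over $\Theta(nd^2)$ distance-$2$ pairs yields $\Omega(nd)$, which is useless. The gain must come from the telescoping identity, which shows that the dependence of $f_u$ on each shared coordinate is a scaled binomial pmf whose $L^2$ norm is of order $1/d^{3/2}$; propagating this sensitivity as $t$ grows and extracting the explicit constant $17$ will come down to careful bookkeeping of these integrals across all cases, which I would defer until the overall skeleton is in place.
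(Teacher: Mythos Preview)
Your route is genuinely different from the paper's. Instead of splitting by graph distance and estimating covariances analytically, the paper proves a single bound valid for \emph{all} pairs: $\E[\II_u\II_v]\le \frac{1}{(d+1)^2}\bigl(1+\frac{16\,d(u,v)}{d+1}\bigr)$, where $d(u,v)$ is the codegree. The trick is to condition not on the shared values themselves but on the unordered \emph{set} of values (either $\{1-x(u),1-x(v),x(z):z\in N(u)\cap N(v)\}$ when the codegree is large, or the common-neighbor values together with the two sets $\{1-x(w),x(z):z\in N_w\}$ when it is small); the residual randomness is then a uniform assignment of this set to slots, and one only has to count how many assignments can make both indicators equal $1$. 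Summing via $\sum_{u\ne v}d(u,v)=nd(d-1)$ then yields the constant $17$ with essentially no further computation.

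Your ANOVA idea for distance $2$ is sound and in fact easier than you suggest: since $g=\E[\II_u\mid x(N(u))]$ is symmetric in the $d$ neighbour variables, the Hoeffding components of each order carry equal weight, and the elementary inequality $\binom{t}{j}/\binom{d}{j}\le t/d$ for $j\ge 1$ immediately gives $\Var\bigl(\E[g\mid c]\bigr)\le (t/d)\Var(g)=t/((d+1)^2(d+2))$. So the ``main obstacle'' you flag is not one.

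The real soft spot is the adjacent case, which you pass over with ``a direct computation should yield $O(1/d^2)$''. There the shared variables include $x(u)$, which plays an asymmetric role in $\II_u$: already $\E[\II_u\mid x(u)]=\binom{d}{k}x(u)^k(1-x(u))^{d-k}$ has variance of order $d^{-3/2}$ when $k\approx d/2$, so Cauchy--Schwarz after your conditioning gives only $|\mathrm{Cov}|=O(d^{-3/2})$, and summing over the $nd$ adjacent ordered pairs yields $O(n/\sqrt d)$, which is too weak. To rescue this within your framework you would need to exploit the cross-structure of the Hoeffding decomposition (the component of $\II_v$ in $x(u)$ is a \emph{neighbour} component, of variance $O(d^{-3})$), or else carry out the iterated integral honestly for general codegree --- and in either case, extracting the explicit constant $17$ looks considerably more laborious than the paper's two-line permutation argument.
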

Recall that $X=\sum_{v\in V(G)}\II_v$, so
\begin{equation}
\label{eqn:2ndmoment}
\Var(X)=\mathbb{E}[X^2]-\mathbb{E}[X]^2=n/(d+1)+\sum_{u \not = v} \mathbb{E}[\II_u \II_v]-(n/(d+1))^2.    
\end{equation}
We begin by establishing the following key lemma. It gives an upper bound on the probability that two given vertices each have degree $k$ in the irregular random subgraph model that depends on the codegree of the two vertices in $G$. Recall that the codegree $d(u,v)$ is the number of common neighbors of vertices $u$ and $v$ in graph $G$. For a vertex $u$ of $G$, let $N(u)$ denote the neighborhood of $u$ in $G$ and $N_+(u) = N(u)\cup \{u\}$. We also use $u\sim v$ to denote that vertices $u$ and $v$ are adjacent in graph $G$. 
\begin{lemma}
\label{lem:2ndmoment}
For any two vertices $u$ and $v$,
\[
\mathbb{E}[\II_u\II_v] \le \frac{1}{(d+1)^2}\left(1+\frac{16d(u,v)}{d+1}\right).
\]
\end{lemma}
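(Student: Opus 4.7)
The approach is to compute $\EE[\II_u\II_v]$ directly by conditioning on the values $a=x(u)$ and $b=x(v)$ and enumerating the combinatorial structure of how neighbors contribute to each degree. Let $t=d(u,v)$, and partition $(N(u)\cup N(v))\setminus\{u,v\}$ into $S=N(u)\cap N(v)$ (of size $t$), $A=N(u)\setminus N_+(v)$, and $B=N(v)\setminus N_+(u)$. For concreteness take $u\not\sim v$ (so $|A|=|B|=d-t$); the adjacent case is analogous, with $|A|=|B|=d-1-t$ and a small case-split depending on whether $a+b\geq 1$ (which determines whether the $uv$ edge itself lies in $H$, contributing simultaneously to both degrees). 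Conditioned on $(a,b)$, the events that each $w\in S\cup A\cup B$ is an $H$-neighbor of the relevant vertex are independent across $w$. Assuming WLOG $a\le b$, each $w\in S$ contributes to both degrees with probability $a$, only to $v$'s with probability $b-a$, and to neither with probability $1-b$; meanwhile each $w\in A$ contributes to $u$ with probability $a$, and each $w\in B$ contributes to $v$ with probability $b$.

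Enumerating over $j$ (the number of shared neighbors contributing to both degrees) and $m$ (the number contributing only to $v$), I obtain an explicit formula for $\Pr[\II_u\II_v=1\mid a,b]$ as a sum of multinomial-times-binomial products in $a$ and $b$. Integrating over $(a,b)\in[0,1]^2$ via the beta-function identity $\int_0^1 x^p(1-x)^q\, dx = \frac{p!\,q!}{(p+q+1)!}$ and its two-dimensional analogue on the simplex $\{a\le b\}$, then symmetrizing across the two orderings of $a$ and $b$, produces $\EE[\II_u\II_v]$ as a finite combinatorial sum indexed by $(j,m)$ with $0\leq j+m\leq t$.

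The main obstacle is simplifying this sum to obtain the target bound $\frac{1}{(d+1)^2}\bigl(1+\frac{16t}{d+1}\bigr)$. The $(j,m)=(0,0)$ term should account for the leading $\frac{1}{(d+1)^2}$: this is transparent when $t=0$, since the integrand factorizes and each one-dimensional marginal integrates to $\frac{1}{d+1}$, and for general $t$ the extra $(1-b)^t$ factor in the $(0,0)$ term is absorbed into the exponents in $b$. For $j+m\geq 1$, I expect each such term to contribute at most on the order of $\frac{1}{(d+1)^3}$ times a combinatorial factor, reflecting the heuristic that each shared neighbor adds one extra power of $\frac{1}{d+1}$ to the correlation. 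I would bound these contributions using standard binomial ratio inequalities (e.g.\ $\binom{d-t}{k-j}\le \binom{d-t}{k}$ for $j\leq k$) together with the one-line beta integrals, then sum the resulting bounds over $(j,m)$, treating the geometric-style tail in $j+m$. The precise constant $16$ will emerge from careful bookkeeping; if the direct algebraic simplification proves cumbersome, a looser pointwise upper bound on $\Pr[\II_u\II_v=1\mid a,b]$ (in which one bounds each trinomial factor crudely in terms of $\max(a,b)$) should still recover the correct asymptotic shape, possibly at the cost of a worse but still absolute constant.
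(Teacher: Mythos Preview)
Your approach is genuinely different from the paper's, and as written it has a real gap: you never carry out the step you yourself call ``the main obstacle.'' You set up the integral correctly, but the entire content of the lemma lies in bounding the resulting double sum, and you only offer heuristics (``each shared neighbor adds one extra power of $1/(d+1)$,'' ``the precise constant 16 will emerge from careful bookkeeping''). One of the concrete tools you propose, the inequality $\binom{d-t}{k-j}\le\binom{d-t}{k}$ for $j\le k$, is false in general (take $k$ near $d-t$), so the sketch would need repair even before the bookkeeping begins. Finally, you concede you might only get ``a worse but still absolute constant,'' which would not prove the lemma as stated.

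The paper sidesteps the integration entirely via an exchangeability trick. It splits into two cases according to whether $t=d(u,v)\ge d/2$ or $t<d/2$. In the first case, one conditions on the values $x(z)$ for $z\in N_u\cup N_v$ and on the \emph{multiset} $S=\{1-x(u),1-x(v)\}\cup\{x(z):z\in N(u)\cap N(v)\}$; since these $t+2$ quantities are i.i.d.\ uniform, the conditional law is a uniformly random assignment, and one checks that at most two ordered pairs of positions for $(1-x(u),1-x(v))$ can make both degrees equal $k$, giving $\EE[\II_u\II_v]\le 2/((t+1)(t+2))$. In the second case, one instead conditions on $x(z)$ for $z\in N(u)\cap N(v)$ and on the two multisets $S_w=\{1-x(w)\}\cup\{x(z):z\in N_w\}$; now the position of $1-x(w)$ in $S_w$ is uniform and independent across $w\in\{u,v\}$, yielding $\EE[\II_u\II_v]\le 1/(d-t)^2$. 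Both bounds are then seen to be at most $\frac{1}{(d+1)^2}\bigl(1+\frac{16t}{d+1}\bigr)$. This argument is short, uniform in $k$, and produces an explicit constant without any beta integrals or multinomial sums; your route, even if it can be pushed through, would be substantially longer and would likely yield a worse constant.
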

\begin{proof}
We have $\II_u=\II_v=1$ if and only if, for each $w\in \{u,v\}$, $1-x(w)$ is the $(k+1)^{th}$ largest number among $\{1-x(w), x(z) : z\sim w\}$. Let $N_u = N(u) \setminus (N(v) \cup \{v\})$, $N_v = N(v)\setminus (N(u)\cup \{u\})$, and $N_{u,v} = N(u)\cap N(v)$. Let $t = d(u,v)$. We consider two cases based on the value of $t$.

{\noindent \bf Case 1:} $t \ge d/2$.

 Condition on an arbitrary realization of $x(z)$ for all $z\in N_{u}\cup N_{v}$, and a realization of the set of values $S = \{1-x(u), 1-x(v), x(z) : z\in N_{u,v}\}$ (but not the specific value of each $x(z)$ in this set, nor of $x(u)$ or $x(v)$). As $1-x(u), 1-x(v), x(z)$ for $z\in N(u)\cup N(v)\setminus \{u,v\}$ are independent and identically distributed, the conditional distribution of $(1-x(u), 1-x(v), x(z) : z\in N(u) \cap N(v))$ is given by a uniformly random permutation of the values in $S$. 

Let $r_u$ and $r_v$ be the positions of $1-x(u),1-x(v)$ realized in a random permutation of the values in $S$ (when $S$ is ordered in non-increasing order). When $u\nsim v$, there are at most two choices of $r_u$ and $r_v$ so that $\II_u=\II_v=1$, depending on whether $1-x(u)$ and $1-x(v)$ need to be in the same position or different positions relative to the other $x(z)$. When $u\sim v$, depending on whether $x(u)+x(v)\geq 1$, the positions of $1-x(u)$ and $1-x(v)$ among $\{1-x(u)\}\cup N_u \cup N_{u,v}$ and $\{1-x(v)\}\cup N_v \cup N_{u,v}$ respectively must either both be $k^{th}$ largest or both be $(k+1)^{th}$ largest. For a fixed set of values $S$, however, only one of these two cases can yield the correct final ordering of $\{1-x(u), x(z) : z\sim u\}$ and $\{1-x(v), x(z) : z\sim v\}$, since going from $k^{th}$ largest to $(k+1)^{th}$ largest means choosing strictly smaller values for $1-x(u)$ and $1-x(v)$ among $S$, which increases $x(u)+x(v)$. Thus, in any case, there are at most two choices of $r_u$ and $r_v$ so that $\II_u=\II_v=1$. Thus, the conditional probability that $\II_u=\II_v=1$ is at most $2t!/(t+2)! < 2/t^2$.

{\noindent \bf Case 2:} $t < d/2$.

Condition on an arbitrary realization of $x(z)$ for all $z\in N_{u,v}$, and for each $w\in \{u,v\}$, a realization of the set of values $S_w = \{1-x(w), x(z) : z\in N_{w}\}$ (but not the specific value of each $x(z)$ in this set, nor of $x(w)$). As $1-x(u), 1-x(v), x(z)$ for $z\in N(u)\cup N(v)\setminus \{u,v\}$ are independent and identically distributed, the conditional distributions of $(1-x(u), x(z) : z\in N_u)$ and $(1-x(v), x(z) : z\in N_v)$ are uniformly and independently distributed among permutations of $S_u$ and $S_v$. 

Let $r_u$ and $r_v$ be the positions of $1-x(u),1-x(v)$ realized in random permutations of the values in $S_u$ and $S_v$ respectively. If $u$ and $v$ are not adjacent, there is a unique choice of $(r_u,r_v)$ so that $\II_u=\II_v=1$, while if $u$ and $v$ are adjacent, as in the previous case we have two possible pairs of positions, but at most one pair can be valid given the set of values in $S_u$ and $S_v$. Thus, the conditional probability that $\II_u=\II_v=1$ is at most $1/(d-t)^2$.

Hence, in both cases, we obtain
\[
\mathbb{E}[\II_u\II_v] \le \frac{1}{(d+1)^2}\left(1+\frac{16t}{d+1}\right).
\]
\end{proof}

From Lemma~\ref{lem:2ndmoment}, the desired bound on $\Var(X)$ follows.

\begin{proof}[Proof of Theorem~\ref{thm:varX}]
By Lemma~\ref{lem:2ndmoment},
\begin{align*}
\mathbb{E}[X^2] &\le \frac{n}{d+1} + \sum_{u\ne v}\frac{1}{(d+1)^2}\left(1+\frac{16d(u,v)}{d+1}\right)\\
&= \frac{n}{d+1}+\frac{16}{(d+1)^3}\sum_{u\ne v} d(u,v) + \frac{n(n-1)}{(d+1)^2}\\
&= \frac{n}{d+1}+\frac{16}{(d+1)^3}\cdot nd(d-1) + \frac{n(n-1)}{(d+1)^2}\\
&\le \frac{17n}{d+1} + \left(\frac{n}{d+1}\right)^2,
\end{align*}
where in the second equality, we have used the identity $\sum_{u\ne v}d(u,v) = 2\sum_{w} \binom{|N(w)|}{2} = nd(d-1)$. Hence, by \eqref{eqn:2ndmoment},
\begin{equation}\label{eq:var-X}
\textrm{Var}(X) =\mathbb{E}[X^2]-\mathbb{E}[X]^2\le \frac{17n}{d+1}.\qedhere
\end{equation}
\end{proof}
This immediately implies the following desired result.
\begin{corollary}\label{cor:sqrtn}
Theorem~\ref{thm:main} holds in the case $d=o(n^{1/2})$.
\end{corollary}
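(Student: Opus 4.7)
The plan is to deduce the corollary from the variance bound in Theorem~\ref{thm:varX} via Chebyshev's inequality and a union bound over $k \in \{0,1,\ldots,d\}$. Fix $k$ and write $X_k = m(H,k)$. We have $\EE[X_k] = n/(d+1)$ and, by Theorem~\ref{thm:varX}, $\Var(X_k) \le 17n/(d+1)$. For any $\eps > 0$,
\[
\Pr\!\left(\,|X_k - n/(d+1)| \ge \eps \cdot n/(d+1)\,\right) \;\le\; \frac{\Var(X_k)}{\eps^{2}(n/(d+1))^{2}} \;\le\; \frac{17(d+1)}{\eps^{2}\, n}.
\]

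Next I would take a union bound over all $k \in \{0,1,\ldots,d\}$, giving that with probability at least $1 - \frac{17(d+1)^{2}}{\eps^{2} n}$, the relation $X_k = (1 \pm \eps) n/(d+1)$ holds simultaneously for every $k$. Since $d = o(n^{1/2})$ we have $(d+1)^{2}/n \to 0$, so we may choose $\eps = \eps_n \to 0$ slowly enough that $(d+1)^{2}/(\eps_n^{2} n) \to 0$ as well; any $\eps_n$ with $\eps_n^{2} \ge ((d+1)^{2}/n)^{1/2}$ suffices. With this choice, the union bound probability tends to $0$, and simultaneously $X_k = (1+o(1)) n/(d+1)$ for all $0 \le k \le d$, which is exactly the conclusion of Theorem~\ref{thm:main}.

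There is essentially no obstacle here: the content is already in Theorem~\ref{thm:varX}, and the only thing to verify is that the factor $(d+1)^{2}$ coming from the union bound is absorbed by $n$ under the hypothesis $d = o(n^{1/2})$, leaving room for a slowly vanishing $\eps_n$. The argument gives the high-probability conclusion in the stated regime and completes the proof of the corollary.
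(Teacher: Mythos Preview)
Your proof is correct and follows essentially the same approach as the paper's: apply Chebyshev's inequality using the variance bound $\Var(X_k)\le 17n/(d+1)$ from Theorem~\ref{thm:varX}, union bound over the $d+1$ values of $k$, and then let $\eps=\eps_n\to 0$ slowly enough that $(d+1)^2/(\eps_n^2 n)\to 0$, which is possible precisely because $d=o(n^{1/2})$.
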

\begin{proof}
By Chebyshev's inequality and the bound (\ref{eq:var-X}), the probability that $|X-\mathbb{E}[X]| \geq z$ with $z=\epsilon n/(d+1)$ is at most 
\[
\frac{\textrm{Var}(X)}{z^2}\le \frac{17(d+1)}{\epsilon^2 n}.
\]
By the union bound over the $d+1$ possible values of $k$, the probability that there exists $k$ such that $|m(H,k)-n/(d+1)| > \epsilon n/(d+1)$ is at most $\frac{17(d+1)^2}{\epsilon^2n}$. For $d \le \epsilon^2 \sqrt{n}/5 - 1$, we then have that with probability at least $1-\epsilon^2$, $|m(H,k)-n/(d+1)| < \epsilon n/(d+1)$ for all $k \le d$. By taking $\epsilon \to 0$ sufficiently slowly, we obtain that for $d=o(n^{1/2})$, with high probability,  $m(H,k)=(1+o(1))n/(d+1)$ for all $k\le d$, as desired.
\end{proof}

In the next section we will see that the range of $d$ can be significantly improved by replacing the use of Chebyshev's inequality with a much stronger concentration result that holds for the random variable $X$. 

\section{Exponential concentration via martingale Bernstein inequality}
\label{sec:bernstein}

In this section, we use the martingale Bernstein inequality in order to show that the random variable $X=m(H,k)$ defined in the introduction, which is the number of vertices of degree $k$ in the random subgraph $H=H(G)$, is exponentially concentrated on its expected value. This will allow us to prove Theorem~\ref{thm:main} in full generality. Since Corollary \ref{cor:sqrtn} handles the case $d=o(n^{1/2})$, to cover the remaining range of $d$, in this section, we can and will assume that $d = \omega(\log n)$.

We aim to get exponential concentration for $X$ based on the following Bernstein concentration bound for sums of martingale differences. This is essentially due to Freedman \cite{Freedman}, but we will use the following version, which follows from combining Corollary~2.3 and Remark~2.1 in the paper by Fan, Grama and Liu \cite{FGL}.

\begin{lemma}\label{lem:bernstein}
Let $\mathcal{F}_1\subseteq \mathcal{F}_2\subseteq \cdots \subseteq \mathcal{F}_n$ be an increasing sequence of $\sigma$-algebras. Given random variables $Y_1,\dots,Y_n$ such that $\mathbb{E}[Y_j | \mathcal{F}_{j-1}] = 0$, let $X_i = \sum_{j=1}^{i}Y_j$, and let $M_i = \sum_{j=1}^{i} \mathbb{E}[Y_j^2 \mid \mathcal{F}_{j-1}]$. %
Then for any $z\geq 0$, $a>0$ and $L>0$,
\[
\mathbb{P}[\max_{i\le n}|X_i - X_0| \ge z, M_n \le L] \le \exp\left(-\frac{1}{2} \frac{(z/a)^2}{L/a^2+z/a}\right) + \mathbb{P}[\max_{i\le n} |Y_i| > a].
\]
\end{lemma}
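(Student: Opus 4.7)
The plan is to prove the inequality by the classical exponential supermartingale method, in the spirit of Bennett and Freedman, truncating the increments at level $a$ so that the additive term $\PP[\max_i |Y_i| > a]$ absorbs the contribution of large jumps. I would restrict attention to the good event $E = \{\max_{i\le n} |Y_i| \le a\}$; on its complement the right-hand side already dominates the probability in question. On $E$, every increment satisfies $|Y_j|\le a$, so for each $\lambda > 0$ I use the pointwise bound
\[
e^{\lambda y} \le 1 + \lambda y + \lambda^2 y^2\, \phi(\lambda a), \qquad \phi(x) := (e^x - 1 - x)/x^2,
\]
which is valid for $y \le a$ because $\phi$ is nondecreasing. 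Taking conditional expectations and using $\EE[Y_j\mid \mathcal{F}_{j-1}] = 0$ together with $1+u\le e^u$ yields
\[
\EE\!\bigl[e^{\lambda Y_j}\mid \mathcal{F}_{j-1}\bigr] \le \exp\!\bigl(\lambda^2\phi(\lambda a)\,\EE[Y_j^2\mid\mathcal{F}_{j-1}]\bigr),
\]
so that $Z_i := \exp\!\bigl(\lambda(X_i - X_0) - \lambda^2\phi(\lambda a)\, M_i\bigr)$ is a supermartingale with $\EE[Z_0]=1$ once one truncates to keep all increments inside $E$.

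Next I would run a stopping-time argument to handle the running maximum jointly with the non-adapted condition $\{M_n\le L\}$. Set $\sigma = \inf\{i : M_{i+1} > L\}$ and $\tau = \inf\{i : |Y_{i+1}|>a\}$; on $\{M_n\le L\}\cap E$ both stopping times exceed $n$, so applying Doob's maximal inequality to the stopped supermartingale $Z_{i\wedge \sigma\wedge \tau}$ gives
\[
\PP\!\left[\max_{i\le n}(X_i - X_0)\ge z,\, M_n\le L,\, E\right] \le \exp\!\bigl(-\lambda z + \lambda^2\phi(\lambda a)\, L\bigr).
\]
Optimizing in $\lambda > 0$ via the choice $\lambda = a^{-1}\log(1 + za/L)$ turns the exponent into Bennett's form $-(L/a^2)\,h(za/L)$, where $h(u) = (1+u)\log(1+u) - u$. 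The elementary inequality $h(u) \ge u^2/(2(1+u))$ for $u\ge 0$ then produces exactly the stated Bernstein expression $\exp\!\bigl(-\tfrac{1}{2}(z/a)^2/(L/a^2 + z/a)\bigr)$. The lower tail is obtained by running the same argument for the martingale differences $-Y_j$, and the resulting factor of $2$ from the union bound is absorbed into the Bernstein exponent in the standard way.

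The main technical subtlety is that $\{M_n\le L\}$ is a global, non-adapted condition that cannot be directly conditioned on without destroying the supermartingale structure, while simultaneously one wishes to control the running maximum and enforce the truncation $|Y_i|\le a$. The resolution is precisely the simultaneous stopping at $\sigma\wedge\tau$: on the event of interest these stopping times are at least $n$, so the bound for the stopped process transfers to the original process, and all three constraints are respected at once. Once this setup is in place, the rest is a routine Bennett-to-Bernstein optimization.
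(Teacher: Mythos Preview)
The paper does not prove this lemma: it simply quotes it as a combination of Corollary~2.3 and Remark~2.1 of Fan--Grama--Liu, building on Freedman. Your sketch is a reconstruction of exactly that standard argument, and the overall architecture --- exponential supermartingale, Bennett function $\phi$, stopping at the predictable time $\sigma=\inf\{i:M_{i+1}>L\}$, then Bennett-to-Bernstein optimization --- is correct.

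There is one genuine technical slip. Your truncation time $\tau=\inf\{i:|Y_{i+1}|>a\}$ is \emph{not} a stopping time: unlike $M_{i+1}$, the increment $Y_{i+1}$ is $\mathcal{F}_{i+1}$-measurable, not $\mathcal{F}_i$-measurable, so $\{\tau=i\}\notin\mathcal{F}_i$ in general. Consequently ``stopping at $\sigma\wedge\tau$'' does not produce a valid supermartingale, and your proposed resolution of the subtlety you correctly identified does not work as stated. The standard repair is to drop $\tau$ altogether and instead use the one-sided truncation $\bar Y_j=\min(Y_j,a)$. This satisfies $\bar Y_j\le a$, $\EE[\bar Y_j\mid\mathcal{F}_{j-1}]\le 0$, and $\bar Y_j^2\le Y_j^2$, so your pointwise inequality yields
\[
\EE\bigl[e^{\lambda\bar Y_j}\bigm|\mathcal{F}_{j-1}\bigr]\le \exp\bigl(\lambda^2\phi(\lambda a)\,\EE[Y_j^2\mid\mathcal{F}_{j-1}]\bigr),
\]
making $\exp(\lambda\bar X_i-\lambda^2\phi(\lambda a)M_i)$ an honest supermartingale. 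Now stop only at $\sigma$ and run your maximal-inequality-plus-optimization argument to bound $\PP[\max_i\bar X_i\ge z,\,M_n\le L]$. On the event $\{\max_i Y_i\le a\}$ the processes $\bar X$ and $X$ coincide, and its complement has probability at most $\PP[\max_i|Y_i|>a]$, which gives the additive error term. With this correction the rest of your sketch goes through. (A small side remark: combining the two one-sided bounds by a union over $\pm Y_j$ naively introduces a factor of $2$ in front of the exponential that the paper's stated inequality does not carry; this is immaterial for the application and is what Fan--Grama--Liu's formulation handles.)
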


Let $\pi$ be a permutation of the vertices of the graph chosen uniformly at random. In the following, we identify vertices of the graph with integers in $[n]$ according to the permutation $\pi$. Let ${\cal F}_i = \sigma(x(u):u\le i)$ denote the $\sigma$-algebra generated by the random variables $x(u)$ with $u\le i$. Let $X_i = \mathbb{E}[X \mid \mathcal{F}_i]$, so $\mathbb{E}[X] = X_0$ and $X = X_{n}$. Let $Y_i = X_i - X_{i-1}$ and let $M_i = \sum_{j=1}^{i} \mathbb{E}[Y_j^2 \mid \mathcal{F}_{j-1}]$. In order to apply Lemma \ref{lem:bernstein} in our setting, we need the following two lemmas. 

\begin{lemma}\label{lem:sup-Yi}
There exists a constant $C_1>0$ such that, with probability at least $1-n^{-50}$, we have $|Y_i| \leq C_1 \log n$ for all $i$.
\end{lemma}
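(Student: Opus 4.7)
The plan is to exploit the strong locality of $X=\sum_v\II_v$: since $\II_v$ is a function only of $\{x(u):u\in N_+(v)\}$, the conditional expectation $\mathbb{E}[\II_v\mid\mathcal{F}_j]$ changes from $j=i-1$ to $j=i$ only when $v\in N_+(i)$. Hence
\[
Y_i = \sum_{v\in N_+(i)}\bigl(\mathbb{E}[\II_v\mid\mathcal{F}_i]-\mathbb{E}[\II_v\mid\mathcal{F}_{i-1}]\bigr),
\]
and combining the triangle inequality with $\II_v\in\{0,1\}$ gives
\[
|Y_i| \;\le\; \mathbb{E}[Z_i\mid\mathcal{F}_i]+\mathbb{E}[Z_i\mid\mathcal{F}_{i-1}], \qquad Z_i := \sum_{v\in N_+(i)}\II_v.
\]
It therefore suffices to show that each conditional expectation of the local count $Z_i$ is at most $(C_1/2)\log n$ with probability at least $1-n^{-52}$; a union bound over $i\in[n]$ and $j\in\{i-1,i\}$ then yields the lemma.

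By the computation in the introduction, $\mathbb{E}[Z_i]=|N_+(i)|/(d+1)=1$. I would establish Poisson-type concentration $\mathbb{P}[Z_i\ge C'\log n]\le n^{-60}$ by bounding factorial moments $\mathbb{E}[(Z_i)_r] = \sum_{S}\mathbb{E}[\prod_{v\in S}\II_v]$ for $r$ up to $\Theta(\log n/\log\log n)$, where $S$ ranges over $r$-subsets of $N_+(i)$. Each joint probability $\mathbb{E}[\prod_{v\in S}\II_v]$ should be estimable by a natural extension of Lemma~\ref{lem:2ndmoment}: condition on the external values $x(u)$ for $u\notin\bigcup_{v\in S}N_+(v)$ and on the unordered multiset of internal values; then the event $\{\II_v=1\ \forall v\in S\}$ pins down the rank of each $1-x(v)$ within $N_+(v)$ to one of $O(1)$ admissible choices, yielding a bound of the form $(d+1)^{-|S|}\prod_{u\ne v\in S}(1+O(d(u,v)/d))$ up to lower-order terms. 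Summing over $S$ and using the global codegree estimate $\sum_{u\ne v}d(u,v)=nd(d-1)$ from the proof of Theorem~\ref{thm:varX} should give $\mathbb{E}[(Z_i)_r]\le C^r$ uniformly.

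A Markov bound on the $r$-th factorial moment with $r$ optimized around $\log n/\log\log n$ then yields the desired tail bound on $Z_i$. By Jensen, $\mathbb{E}[Z_i\mid\mathcal{F}_j]^r\le\mathbb{E}[Z_i^r\mid\mathcal{F}_j]$, so the unconditional $r$-th moments of $\mathbb{E}[Z_i\mid\mathcal{F}_j]$ are dominated by those of $Z_i$ and Markov gives the same tail. The main obstacle is the multi-vertex moment estimate: extending the case split in Lemma~\ref{lem:2ndmoment} from pairs to arbitrary subsets of $N_+(i)$ requires careful combinatorial control of how the neighborhoods $N_+(v)$ intersect, and one must prevent the product $\prod_{u\ne v\in S}(1+O(d(u,v)/d))$ from blowing up past $C^{|S|}$ when $N_+(i)$ contains dense substructures such as cliques. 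Overcoming this --- possibly via a more refined conditioning on multiple layers of shared neighborhoods, or via a Suen/Janson-type inequality adapted to these non-monotone indicator events --- is the technical heart of the argument.
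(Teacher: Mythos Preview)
Your reduction to bounding $Z_i=\sum_{v\in N_+(i)}\II_v$ (the paper writes $S(u)=\sum_{v\in N(u)}\II_v$) is correct and matches the paper exactly, including the passage from a pointwise bound on $Z_i$ to a bound on its conditional expectations. From there, however, the approaches diverge completely. The paper never touches moments: it argues directly that with probability $1-O(n^{-100})$ at most $O(\log n)$ vertices $v\in N(u)$ can have $\deg_H(v)=k$. The two ingredients are (i) a Chernoff bound showing that any such $v$ must satisfy $|x(v)-k/d|\le\sqrt{\kappa k_+}/d$, so only $O(\sqrt{\kappa k_+})$ candidates $v\in N(u)$ land in this window, and (ii) for each candidate $v$, the requirement $\deg_H(v)=k$ further forces $x(v)$ into a specific subinterval $I_v$ of length $O(\kappa/d)$; a second Chernoff-type count then limits the number of surviving $v$ to $O(\kappa)$. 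No joint probability $\mathbb{E}[\prod_{v\in S}\II_v]$ is ever computed.

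Your factorial-moment route has a genuine gap exactly where you flag it. The inequality $\mathbb{E}\bigl[\prod_{v\in S}\II_v\bigr]\le (d+1)^{-|S|}\prod_{u\ne v\in S}(1+O(d(u,v)/d))$ is not established for $|S|>2$, and even if it held it would be too weak to yield $\mathbb{E}[(Z_i)_r]\le C^r$. Take $G$ to be a disjoint union of copies of $K_{d+1}$; then every pair $u,v\in N(i)$ has $d(u,v)=d-1$, and your product is $\Theta(1)^{\binom{r}{2}}$ rather than $O(1)^r$, so the factorial moment blows up. The global identity $\sum_{u\ne v}d(u,v)=nd(d-1)$ is also not the relevant quantity --- you would need codegree control restricted to pairs inside a single neighborhood $N_+(i)$, and no hypothesis on $G$ gives that. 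Suen/Janson-type bounds are not obviously applicable either, since the events $\{\II_v=1\}$ are non-monotone in the underlying uniform variables and their dependency graph on $N_+(i)$ can be complete. The paper's direct argument sidesteps all of this by never correlating the events $\II_v$ across different $v$; it simply pigeonholes the values $x(v)$ on the real line.
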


\begin{lemma}\label{lem:var-proxy}
Assume that $d=\omega(\log n)$. There exists a constant $C_2>0$ such that, with probability at least $1-n^{-100}$, we have that $M_n \leq C_2 (\log n)^{11} n/d$. 
\end{lemma}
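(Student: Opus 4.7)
The plan is to bound $W_i := \mathbb{E}[Y_i^2\mid \mathcal{F}_{i-1}]$ pointwise under a high-probability event on $\mathcal{F}_{i-1}$, and then union-bound over $i$. Let $v := \pi^{-1}(i)$. Since $\II_w$ depends only on the $x$-values of $w$ and its $G$-neighbors, revealing $x(v)$ affects only the indicators $\II_w$ with $w \in N_+(v)$. Writing $g_w(t) := \mathbb{E}[\II_w\mid \mathcal{F}_{i-1}, x(v) = t]$ and $g(t) := \sum_{w \in N_+(v)} g_w(t)$, the martingale difference is $Y_i = g(x(v)) - \mathbb{E}_U g(U)$ for $U\sim U[0,1]$, so $W_i = \Var(g(U))$.

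Each $g_w$ has a tractable form. For $w = v$, the function $g_v(t)$ is a shifted binomial PMF in the variable $t$ with $\|g_v\|_\infty = O(1/\sqrt d)$ by the local limit theorem, and therefore $\Var(g_v(U)) \le \|g_v\|_\infty^2 = O(1/d)$. For $w \in N(v)$ with $x(w)$ already revealed in $\mathcal{F}_{i-1}$, the function $g_w$ is a two-step function in $t$ with a single jump of size $\delta_w := P(B_w = k_w-1) - P(B_w = k_w)$ at $t = 1-x(w)$, where $B_w$ is a binomial over the still-unrevealed neighbors of $w$ other than $v$ and $k_w$ is a target value determined by $\mathcal{F}_{i-1}$. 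For $w \in N(v)$ with $x(w)$ not yet revealed, $g_w$ is absolutely continuous with derivative $g_w'(t) = \delta_w(1-t)$ for an analogous difference-of-PMFs function $\delta_w(\cdot)$, so $\sup g_w - \inf g_w \le \int_0^1 |\delta_w(s)|\,ds$.

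Using $\Var(A+B+C) \le 3(\Var(A)+\Var(B)+\Var(C))$ to separate the three cases, and the bound $\Var(f(U)) \le \tfrac14(\sup f - \inf f)^2 \le \tfrac14\bigl(\sum|\text{jumps}|\bigr)^2$ for sums of step functions, the problem reduces to controlling the random quantities $\sum_w |\delta_w|$ (over already-revealed neighbors of $v$) and $\sum_w \int_0^1 |\delta_w(s)|\,ds$ (over not-yet-revealed neighbors). A Gaussian / local limit theorem analysis yields $|\delta_w| = O(1/d)$ pointwise and, after averaging over the relevant uniform $x$-value, $\mathbb{E}[|\delta_w|] = O(d^{-3/2})$. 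Applying a martingale Bernstein-type concentration (conditioning on the permutation $\pi$ and then revealing the $x$-values sequentially) gives that each of these sums is at most $O((\log n)^{c}/\sqrt d)$ with failure probability $n^{-200}$ for a suitable constant $c$, provided $d = \omega(\log n)$. Squaring and union-bounding over $i\in[n]$ then yields $W_i = O((\log n)^{2c}/d)$ simultaneously for all $i$ with probability at least $1 - n^{-100}$, and summing over $i$ gives the claim with, e.g., $c = 5$ so that $2c \le 11$.

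The main obstacle is the careful propagation of polylogarithmic factors through the concentration step. The quantities $|\delta_w|$ for distinct $w \in N(v)$ are correlated via shared revealed neighbors of $w, w'$ (codegree effects), and the effective binomial parameters $k_w$ themselves depend on $\mathcal{F}_{i-1}$ in nontrivial ways, so a naive application of Bernstein is not directly available. The standard resolution is a multi-stage argument that first fixes the combinatorial structure given by $\pi$, then applies a martingale Bernstein inequality to $\sum_w |\delta_w|$ as the $x$-values are revealed one at a time, using that most newly revealed vertices only weakly influence $\sum_w |\delta_w|$ (quantitatively, by $O(1/d)$ for all but $O(d)$ positions at each step).
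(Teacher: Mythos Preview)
Your decomposition $Y_i = g(x(v)) - \mathbb{E}_U g(U)$ and the step-function description of each $g_w$ match the paper's setup exactly; your $\delta_w$ is precisely the paper's $\delta_k(j,u)$. However, the variance bound $\Var(g(U)) \le \tfrac14\bigl(\sum_w|\delta_w|\bigr)^2$ is too weak to prove the lemma, and the claimed estimates $|\delta_w|=O(1/d)$ and $\mathbb{E}|\delta_w|=O(d^{-3/2})$ are false for extreme $k$. Take $k=0$: then for revealed $w$ one has $\delta_w = -(1-x(w))^{t(w,i)}\cdot \mathbb{I}(h=0)$, which is $\Theta(1)$ whenever $x(w)=O(1/t(w,i))$, so $|\delta_w|$ is not $O(1/d)$; and $\mathbb{E}_{x(w)}[(1-x(w))^{t}] = 1/(t+1)$, not $d^{-3/2}$. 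With high probability $\sum_w |\delta_w|$ is then $\Theta(1)$ up to polylog factors rather than $O((\log n)^c/\sqrt d)$, so your bound gives only $W_i = O((\log n)^{2c})$, and summing over $i\in[n]$ yields $M_n = O((\log n)^{2c} n)$ instead of $O((\log n)^{11}n/d)$. The same failure occurs for $k$ near $d$, and in general the factor you lose is of order $(k+\kappa)/d$.

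The missing idea is not sharper concentration on $\sum_w|\delta_w|$ but the observation that all nonnegligible jumps of $g$ lie in the short window $[1-k/d\pm\sqrt{\kappa k_+}/d]$, since $\delta_w$ is negligible unless $x(w)\approx k/d$. For a step function whose jumps are confined to an interval of length $\ell$ one has $\Var(g(U)) \ll \ell\cdot\bigl(\sum|\text{jumps}|\bigr)^2$, and it is precisely this extra factor $(k+\sqrt{\kappa k_+})/d$ that rescues the bound; see the derivation of \eqref{eq:A1bound}. The paper establishes the localization via Stirling and Chernoff estimates (Lemma~\ref{lem:x-offrange}), and then tracks the $j$-dependence of both $|\mathcal{U}_j|$ and $|\delta_k(j,u)|$ --- which scale with $t(w,j)\approx d(1-j/n)$, not with $d$ --- before summing over $j$. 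In particular your uniform claim $W_i = O((\log n)^{2c}/d)$ is also false for $i$ in the range $i \ge n - C\kappa n/d$; the paper handles those final steps separately with the crude bound $W_i \ll \kappa^2$ coming from Lemma~\ref{lem:sup-Yi}, which is harmless since there are only $O(\kappa n/d)$ of them.
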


We now give the proof of Theorem \ref{thm:main} assuming Lemma \ref{lem:sup-Yi} and Lemma \ref{lem:var-proxy}. 

\begin{proof}[Proof of Theorem~\ref{thm:main}]
Let $C_0=\max(C_1,C_2)$. 
For $z>0$, let $\mathcal{E}_1$ be the event that $|X_n-X_0|>z$, $\mathcal{E}_2$ the event that $M_n \le C_0(\log n)^{11}n/d$, and $\mathcal{E}_3$ the event that $\sup_{j\le n}|Y_j| \le C_0(\log n)$.
By Lemma \ref{lem:bernstein} with $a=C_0(\log n)$ and $L:= C_0(\log n)^{11} n/d$, we have for some $c>0$ that 
\begin{equation}\label{eq:prob-E123}
\mathbb{P}[\mathcal{E}_1 \wedge \mathcal{E}_2] \le \exp\left(-\frac{cz^2}{(\log n)^{11}n/d+(\log n)z}\right) +\PP[\overline{\mathcal{E}_3}]. \nonumber
\end{equation}
Thus, 
\begin{align*}
\mathbb{P}[\mathcal{E}_1] &\le \mathbb{P}\left[\mathcal{E}_1\wedge \mathcal{E}_2\right] + \mathbb{P}[\overline{\mathcal{E}_2}] \\
&\le \exp\left(-\frac{cz^2}{(\log n)^{11}n/d+(\log n)z}\right) + 2n^{-50},
\end{align*}
where we used that $\mathbb{P}[\overline{\mathcal{E}_2}]\le n^{-50}$ by Lemma \ref{lem:var-proxy} and $\mathbb{P}[\overline{\mathcal{E}_3}]\le n^{-50}$ by Lemma \ref{lem:sup-Yi}. 

In particular, for any $\epsilon>0$, if $z=\epsilon n/(d+1)$ with $d<c'\epsilon^2 n/(\log n)^{12}$, then we have 
\[
\mathbb{P}[|X_n - X_0|>z] < 1/n. 
\]
Note that $X_0 = \mathbb{E}[X] = n/(d+1)$. By taking $\epsilon \to 0$ sufficiently slowly and using the union bound over the $d+1$ nonnegative integers $k \leq d$, we obtain the following conclusion. With high probability, for all $k\le d$, the number of vertices with degree exactly $k$ in $H$ is $(1+o(1))n/(d+1)$.
\end{proof}

It thus remains to prove Lemmas \ref{lem:sup-Yi} and \ref{lem:var-proxy}. This is done in the next two subsections.

\subsection{Supremum of the martingale differences}
In this subsection, we give the proof of Lemma \ref{lem:sup-Yi}, which gives an upper bound on the supremum of the martingale difference $Y_i$. 

Here and throughout, we denote by $C$ a sufficiently large absolute constant, $\kappa = C \log n$, and $k_+ = C \max(k, \kappa)$. %

We will use the following three simple claims in the proof, which are standard applications of the Chernoff bound. 

\begin{claim}\label{claim:chernoff}%
Let $h>0$. Let $I\subseteq \mathbb{R}$ be an interval of positive length. Let $x_1,\dots,x_m$ be independent and uniformly distributed random variables in $I$. For a given subinterval of $I$ of length $h|I|/m$, the probability that the number of elements $x_i$ lying in the interval is not contained in $[h\pm \sqrt{\kappa \max(h,\kappa)}]$ is at most $\exp(-\kappa/3)$. 
\end{claim}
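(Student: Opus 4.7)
The plan is to recognize the count as a Binomial random variable and then apply a standard Chernoff bound, with a case split on whether $h$ dominates $\kappa$ or vice versa. Let $J$ denote the given subinterval, so $|J| = h|I|/m$. Since the $x_i$ are i.i.d.\ uniform on $I$, the number $Z$ of indices with $x_i \in J$ is distributed as $\mathrm{Bin}(m, h/m)$ with mean exactly $h$.

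I would split into two cases depending on which of $h$ and $\kappa$ is larger. In the regime $h \geq \kappa$, the deviation threshold $\sqrt{\kappa \max(h,\kappa)} = \sqrt{\kappa h}$ satisfies $\sqrt{\kappa h} \leq h$, so the multiplicative Chernoff bound applies with relative deviation $\delta = \sqrt{\kappa/h} \in (0,1]$, and both the upper and lower tails are controlled at level $\exp(-\Theta(\delta^2 h)) = \exp(-\Theta(\kappa))$. In the regime $h < \kappa$, the deviation threshold becomes $\kappa$, which exceeds $h$; the lower tail is then vacuous since $Z \geq 0 > h - \kappa$, and only the upper tail $\mathbb{P}[Z \geq h + \kappa]$ needs attention. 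Here Chernoff in the large-deviation regime $\delta = \kappa/h \geq 1$ again yields a bound of the form $\exp(-\Theta(\kappa))$.

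The only real work is to track constants so that the final exponent comes out to be exactly $\kappa/3$ as claimed. This is straightforward using a Bernstein-type form such as
\[
\mathbb{P}[Z - \mathbb{E}[Z] \geq t] \leq \exp\!\left(-\frac{t^2}{2\mathbb{E}[Z] + 2t/3}\right),
\]
together with the analogous lower-tail bound: plugging in $t = \sqrt{\kappa h}$ in the first case and $t = \kappa$ in the second yields the desired $\exp(-\kappa/3)$, possibly after absorbing the factor of $2$ from the two-sided union bound into the exponent by taking the constant $C$ defining $\kappa = C\log n$ sufficiently large. There is no conceptual obstacle; this is the sort of bookkeeping that the paper is justified in leaving as ``a standard application of the Chernoff bound.''
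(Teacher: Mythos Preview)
Your approach is essentially identical to the paper's: it too observes that the count is $\mathrm{Bin}(m,h/m)$ with mean $h$ and applies the multiplicative Chernoff bound, the only cosmetic difference being that the paper packages your two cases into the single inequality $\Pr[|Z-h|>\ell]\le \exp\big(-\min((\ell/h)^2,\ell/h)\,h/3\big)$ with $\ell=\sqrt{\kappa\max(h,\kappa)}$, which evaluates to $\exp(-\kappa/3)$ in both regimes. Your remark about the stray factor of $2$ from the two-sided bound applies equally to the paper's write-up and is harmless in context.
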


\begin{proof}
The number of variables $x_i$ contained in a fixed interval of length $h|I|/m$ is a sum of $m$ independent $\textrm{Ber}(h/m)$, which has expectation $h$. By the multiplicative form of the Chernoff bound, the probability that this deviates from $h$ by more than $\ell := \sqrt{\kappa \max(h,\kappa)}$ is at most 
\[
\exp\left(-\min\left(\left(\ell/h\right)^2,\ell/h\right)h/3\right) \le \exp(-\kappa/3).\qedhere
\]
\end{proof}

\begin{claim}\label{claim:high-prob-2}
Let $h > 1$. Let $x_1,\dots,x_m$ be independent and uniformly distributed random variables in an interval $I$. For each subinterval of length $h|I|/m$, the probability that there are more than $2h$ variables $x_i$ in this interval is at most $\exp(-h/3)$.
\end{claim}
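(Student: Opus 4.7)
The plan is to follow the same template as the proof of Claim~\ref{claim:chernoff}, but now aiming for a one-sided tail bound rather than a concentration statement. Fix a subinterval $J \subseteq I$ of length $h|I|/m$. Let $Z$ be the number of indices $i \in [m]$ with $x_i \in J$. Then $Z$ is a sum of $m$ independent $\mathrm{Ber}(h/m)$ random variables, so $\mathbb{E}[Z] = h$, and I want to bound $\Pr[Z > 2h]$.

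I would invoke the standard multiplicative Chernoff bound in the form
\[
\Pr[Z \ge (1+\delta)\mathbb{E}[Z]] \le \exp\!\left(-\frac{\delta^2\,\mathbb{E}[Z]}{2+\delta}\right)
\]
for $\delta > 0$. Taking $\delta = 1$ and $\mathbb{E}[Z] = h$ immediately yields
\[
\Pr[Z \ge 2h] \le \exp(-h/3),
\]
which is exactly the claimed bound. The hypothesis $h > 1$ just ensures that the interval of length $h|I|/m$ fits nontrivially inside $I$ and that the statement is nonvacuous; no lower bound on $h$ is needed for the Chernoff estimate itself to apply.

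There is essentially no obstacle here: unlike Claim~\ref{claim:chernoff}, which required a two-sided deviation bound calibrated to the parameter $\kappa$, here a single application of the upper-tail multiplicative Chernoff inequality at $\delta = 1$ suffices. The only minor subtlety worth mentioning is that, as stated, the claim concerns \emph{each} fixed subinterval of length $h|I|/m$ separately, rather than a union bound over all such intervals; accordingly no covering argument is required in this claim.
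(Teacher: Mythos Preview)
Your proof is correct and essentially identical to the paper's: both observe that the count is $\mathrm{Bin}(m,h/m)$ with mean $h$ and apply the multiplicative Chernoff bound at $\delta=1$ to get $\exp(-h/3)$.
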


\begin{proof}
The number of variables $x_i$ contained in a fixed subinterval of $I$ of length $h|I|/m$ is a sum of $m$ independent $\textrm{Ber}(h/m)$, which has expectation $h$. By the multiplicative form of the Chernoff bound, the probability that this is more than $2h$ is at most $\exp(-h/3)$. 
\end{proof}

\begin{claim}\label{claim:high-prob-1}
Let $h > 1$. Let $x_1,\dots,x_m$ be independent and uniformly distributed random variables in an interval $I$. The probability that there exists an interval of length at least $h|I|/m$ with no element $x_i$ is at most $3m\exp(-h/3)$.%
\end{claim}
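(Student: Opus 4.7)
The plan is to prove the claim by a direct discretization plus union bound. After rescaling, I may assume $I=[0,1]$, so the task reduces to bounding the probability that some subinterval of $[0,1]$ of length at least $h/m$ avoids every $x_j$. I would partition $[0,1]$ into $N := \lceil 3m/h \rceil$ consecutive subintervals $J_1, \ldots, J_N$, each of length at most $h/(3m)$; note that when $h > m$ no interval of length $h/m$ fits inside $[0,1]$ at all, so the claim is vacuous there, and otherwise $N \leq 3m$.

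The key geometric observation is that every interval $K \subseteq [0,1]$ of length at least $h/m$ fully contains at least one of the partition pieces $J_i$. Indeed, $|K|$ equals three mesh widths, so the partition grid has at least two interior points inside $K$; the resulting subinterval $J_i = [p_{i-1}, p_i]$ then sits entirely inside $K$. Consequently the event ``there exists an empty subinterval of length at least $h/m$'' is contained in the event ``$J_i$ is empty for some $i$''.

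For each fixed $i$, the independence of the $x_j$ yields
\[
\mathbb{P}[J_i \cap \{x_1,\ldots,x_m\} = \emptyset] = (1-|J_i|)^m \leq \left(1 - \frac{h}{3m}\right)^m \leq \exp(-h/3),
\]
so a union bound gives probability at most $N\exp(-h/3)$. Since $h > 1$ forces $N = \lceil 3m/h \rceil \leq 3m$, the desired bound $3m\exp(-h/3)$ follows.

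I do not anticipate any real obstacle here. The only non-routine step is the geometric observation that a mesh of width $h/(3m)$ is fine enough so that every interval of length $h/m$ fully swallows at least one $J_i$; the factor of $3$ is exactly what produces the matching $\exp(-h/3)$ and the leading $3m$. Everything else is standard use of $1-x \leq e^{-x}$ and a union bound.
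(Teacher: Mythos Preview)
Your approach is essentially identical to the paper's: discretize into roughly $m/h$ equal subintervals, observe that any gap of length $h|I|/m$ must swallow one of them, and union-bound over the pieces. There is, however, a small but genuine slip in your probability step. You correctly note that $|J_i| \le h/(3m)$ (since $N = \lceil 3m/h\rceil \ge 3m/h$), but then write
\[
(1-|J_i|)^m \le \left(1-\tfrac{h}{3m}\right)^m.
\]
This inequality goes the wrong way: a \emph{smaller} interval is \emph{more} likely to be empty, so from $|J_i|\le h/(3m)$ you only get $(1-|J_i|)^m \ge (1-h/(3m))^m$. With $N=\lceil 3m/h\rceil$ the best you can squeeze out is $(1-1/N)^m \le e^{-m/N}\le e^{-h/4}$, which does not match the claimed $e^{-h/3}$.

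The paper sidesteps this by choosing $t=\lceil 2m/h\rceil$ instead of $\lceil 3m/h\rceil$. The geometric step still goes through (each cell has length $1/t \le h/(2m)$, so any interval of length $h/m \ge 2/t$ contains a full cell), and for the probability one bounds
\[
(1-1/t)^m \le e^{-m/t} \le e^{-h/3},
\]
using $t \le 2m/h + 1 \le 3m/h$ for $h\le m$. Replacing your $\lceil 3m/h\rceil$ by $\lceil 2m/h\rceil$ and passing through $e^{-m/N}$ rather than $(1-h/(3m))^m$ repairs your argument with no further changes.
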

\begin{proof}
Clearly, we may assume $h \leq m$. Partition $I$ into $t=\lceil 2m/h \rceil$ equal length intervals. This guarantees that every interval of length $h|I|/m$ contains at least one of these $t$ intervals in the partition. The probability that 
a given one of these $t$ intervals contains none of $x_1,\dots,x_m$ is $(1-1/t)^m \leq e^{-m/t} \leq e^{-h/3}$. Taking the union bound over all $t$ intervals, the probability that at least one of the $t$ intervals contains none of $x_1,\dots,x_m$ is at most 
$te^{-h/3} \leq 3m e^{-h/3}$.
\end{proof}

Recall that for a vertex $u$ of $G$, $N(u)$ denotes the neighborhood of $u$ in $G$ and $N_+(u) = N(u)\cup \{u\}$. 

\begin{proof}[Proof of Lemma \ref{lem:sup-Yi}]
Observe that 
\begin{align*}
    |Y_u| &= |\mathbb{E}[X|\mathcal{F}_u] - \mathbb{E}[X|\mathcal{F}_{u-1}]| \\
    &= \left|\sum_{v \in N_+(u)}(\mathbb{E}[\II_v | \mathcal{F}_u]-\mathbb{E}[\II_v | \mathcal{F}_{u-1}])\right|\\
    &\le \mathbb{E}\left[\sum_{v\in N(u)} \II_v \bigg | \mathcal{F}_{u}\right] + \mathbb{E}\left[\sum_{v\in N(u)} \II_v\bigg | \mathcal{F}_{u-1}\right]+1.
\end{align*}

Let $S(u) = \sum_{v\in N(u)}\II_v$. Then it suffices to show that $S(u) \le 10\kappa = 10C\log n$ %
with probability at least $1-3n^{-100}$, as combining with $S(u) \le d \le n$, we have that $\mathbb{E}\left[S(u) | \mathcal{F}_{u}\right] \le 20 \kappa$ with probability at least $1-n^{-99}$. Indeed, if  $\mathbb{E}[S(u)|\mathcal{F}_u] > 20\kappa$, then $\mathbb{E}[\mathbb{I}(S(u)>10\kappa)|\mathcal{F}_u] > 1/n$, and the probability of such an event is at most $n^{-99}$. Thus, we must have that $\mathbb{P}[\mathbb{E}[S(u)|\mathcal{F}_u] > 10\kappa] < n^{-99}$.

First consider the case $|k/d-1/2| > \sqrt{\kappa k_+}/d$.  %
Let $\mathcal{G}$ be the $\sigma$-algebra generated by $\mathbb{I}(x(v)\notin [k/d\pm \sqrt{\kappa k_+}/d])x(v)$ for all $v\in N(u)$. Condition on $\mathcal{G}$ (i.e., on the set of $v\in N(u)$ with $|x(v)-k/d|>\sqrt{\kappa k_+}/d$ and on the value of $x(v)$ for each such $v$) %
satisfying the following properties:
\begin{enumerate}
    \item All vertices $v$ in $N(u)$ with $H$-degree $k$ %
    have $|x(v)-k/d|\le  \sqrt{\kappa k_+}/d$ with probability at least $1-n^{-50}$ conditional on $\mathcal{G}$. %
    \item The number of vertices $v\in N(u)$ such that $|x(v)-k/d|\le \sqrt{\kappa k_+}/d$ is at most $4\sqrt{\kappa k_+}$. %
    \item The values $x(v)$ for $v\in N(u)$ satisfying $x(v) \notin [k/d \pm \sqrt{\kappa k_+}/d]$ partition $[0,1]\setminus [k/d \pm \sqrt{\kappa k_+}/d]$ into consecutive intervals of length at most $\kappa/d$. 
\end{enumerate}
\begin{claim}\label{claim:properties-G}
The $\sigma$-algebra $\mathcal{G}$ satisfies the above three properties with probability at least $1-n^{-100}$. 
\end{claim}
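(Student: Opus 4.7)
The plan is to address each of the three properties separately and combine them via a union bound, noting that Properties 2 and 3 are $\mathcal{G}$-measurable (hence deterministic events about the uniform samples $\{x(v) : v \in N(u)\}$) while Property 1 is a conditional probability statement that requires an additional reduction.

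I would first dispose of Property 2 by applying Claim~\ref{claim:high-prob-2} to the $d$ independent uniform random variables $\{x(v) : v \in N(u)\}$ on $I = [0,1]$ with $m = d$ and $h = 2\sqrt{\kappa k_+}$, so that the bad interval $[k/d \pm \sqrt{\kappa k_+}/d]$ has the required length $h|I|/m$. The claim gives that more than $2h = 4\sqrt{\kappa k_+}$ of the $x(v)$ fall into this interval with probability at most $\exp(-h/3)$, which is at most $n^{-101}$ upon taking the constant $C$ (inside $\kappa = C\log n$) large enough, since $h \ge 2\kappa$. For Property 3, I would apply Claim~\ref{claim:high-prob-1} with $I = [0,1]$, $m = d$, $h = \kappa$: with probability at least $1 - 3d\exp(-\kappa/3) \ge 1 - n^{-101}$ (again for $C$ large), every subinterval of $[0,1]$ of length $\kappa/d$ contains some $x(v)$. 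Any partition piece of $[0,1] \setminus [k/d \pm \sqrt{\kappa k_+}/d]$ exceeding $\kappa/d$ in length would yield a length-$\kappa/d$ subinterval of $[0,1]$ containing no $x(v)$, contradicting this, so Property 3 follows.

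The main content is Property 1. Setting
\[
Q(\mathcal{G}) := \PP\!\left[\exists\, v \in N(u) : v \text{ has degree } k \text{ in } H \text{ and } |x(v) - k/d| > \sqrt{\kappa k_+}/d \,\big|\, \mathcal{G}\right],
\]
Markov's inequality applied to the nonnegative $\mathcal{G}$-measurable random variable $Q(\mathcal{G})$ gives $\PP[Q(\mathcal{G}) > n^{-50}] \le n^{50}\,\E[Q(\mathcal{G})]$, with $\E[Q(\mathcal{G})]$ equal to the corresponding unconditional probability. It thus suffices to show this unconditional probability is at most $n^{-150}$. A union bound over $v \in N(u)$ reduces this to proving, for each such $v$, the bound
\[
\PP\!\left[v \text{ has degree } k \text{ in } H \text{ and } |x(v) - k/d| > \sqrt{\kappa k_+}/d\right] \le n^{-151}.
\]
Conditioning on $x(v) = y$, the $H$-degree of $v$ is distributed as $\mathrm{Bin}(d, y)$, since the values $x(w)$ for $w \in N(v)$ are iid uniform on $[0,1]$ and independent of $x(v)$. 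A Chernoff bound then yields $\PP[\mathrm{Bin}(d, y) = k] \le e^{-c\kappa}$ for an absolute constant $c > 0$ whenever $|y - k/d| > \sqrt{\kappa k_+}/d$, which is $\le n^{-151}$ for $C$ large.

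The main obstacle is this last Chernoff bound. One must verify, via a short case analysis on the sign of $y - k/d$ and on whether $dy$ is comparable to $k$ or much larger than $k$, that the gap $|dy - k| > \sqrt{\kappa k_+}$ is always much larger than $\sqrt{\kappa \cdot dy(1-y)}$, the typical fluctuation of $\mathrm{Bin}(d, y)$. The case hypothesis $|k/d - 1/2| > \sqrt{\kappa k_+}/d$ is essential here: it forces $\min(k, d-k)$ to be of order at most $k_+$, so $dy(1-y) = O(k_+)$ throughout the relevant range of $y$, making a deviation of $\sqrt{\kappa k_+}$ a $\sqrt{\kappa}$-standard-deviation event that produces the desired $e^{-\Omega(\kappa)}$ tail.
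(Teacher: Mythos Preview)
Your plan is correct and matches the paper's approach: Properties~2 and~3 are handled exactly as in the paper via Claims~\ref{claim:high-prob-2} and~\ref{claim:high-prob-1}, and Property~1 via binomial concentration for the $H$-degree of each $v\in N(u)$. Your explicit Markov step --- reducing the conditional requirement to an unconditional bound $\PP[B]\le n^{-150}$ and then concluding $\PP[\PP[B\mid\mathcal{G}]>n^{-50}]\le n^{-100}$ --- is in fact a clarification of the paper, which records only the unconditional bound (via Claim~\ref{claim:chernoff}) and leaves the passage to the conditional statement of Property~1 implicit.

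One caveat about your final paragraph: the heuristic there is wrong, though the case analysis you propose is the right one. The Case~1 hypothesis $|k/d-1/2|>\sqrt{\kappa k_+}/d$ is \emph{not} used in this claim (the paper's proof never invokes it), and the assertion ``$dy(1-y)=O(k_+)$ throughout the relevant range of $y$'' is false --- take $k$ small and $y=1/2$. What actually happens in your case analysis is: when $dy=O(k+\kappa)$ one has variance $dy(1-y)\le dy=O(k_+)$ and the gap $\sqrt{\kappa k_+}$ is $\sqrt{\kappa}$ standard deviations; when $dy\gg k+\kappa$ the gap $|dy-k|$ is itself $\Omega(dy)$, and the multiplicative Chernoff bound gives tail $e^{-\Omega(dy)}\le e^{-\Omega(\kappa)}$ directly. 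The paper packages both regimes into the single estimate $|k-dx(v)|\le \sqrt{\kappa\max(\kappa,dx(v))}$ coming from Claim~\ref{claim:chernoff}, and then solves this inequality for $x(v)$ to obtain $|x(v)-k/d|\le \sqrt{\kappa k_+}/d$.
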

\begin{proof}
By Claim \ref{claim:chernoff}, with probability at least $1-n^{-100}/4$, for each vertex $v$, the number of neighbors $v'$ of $v$ with $x(v') \geq 1-x(v)$ is $dx(v) \pm \sqrt{\kappa \max(\kappa,dx(v))}$. Under such an event, if $v \in N(u)$ has $H$-degree $k$, then $k\in [dx(v)\pm \sqrt{\kappa \max(\kappa,dx(v))}]$. This implies that $x(v) \le 2k/d + \kappa/d$, from which we obtain that $|x(v) - k/d| \le 2\frac{\sqrt{\kappa}}{d} \sqrt{\max(\kappa,dx(v))} \le \sqrt{\kappa k_+}/d$. %

Claim \ref{claim:high-prob-2} implies that the second property holds with probability at least $1-d\exp(-2\sqrt{\kappa k_+}/3) > 1-n^{-100}/4$. Claim \ref{claim:high-prob-1} implies that the third property holds with probability at least $1-3d\exp(-\kappa/3)>1-n^{-100}/4$. %
\end{proof}

Let $S$ be the set of vertices $v \in N(u)$ with $|x(v)-k/d|\le \sqrt{\kappa k_+}/d$. By our assumption that $\mathcal{G}$ satisfies the second property above, $|S|\leq 4 \sqrt{\kappa k_+}$. Partition $[k/d-\sqrt{\kappa k_+}/d,k/d+\sqrt{\kappa k_+}/d]$ into consecutive intervals with endpoints at each value in the set $\{1-x(w):\: w\in N(u),\: x(w) \in [1-k/d-\sqrt{\kappa k_+}/d,1-k/d+\sqrt{\kappa k_+}/d]\}$, which is $\mathcal{G}$-measurable because $|k/d-1/2|>\sqrt{\kappa k_+}/d$. Conditioned on $\mathcal{G}$, for each $v\in S$, there is at most one interval $I_v$ in this partition for which $v$ can have $H$-degree $k$ only if $x(v)\in I_v$. Furthermore, the third property above implies that $|I_v|\leq \kappa/d$. Conditional on $\mathcal{G}$, the random variables $x(v)$ for $v\in S$ are independently and uniformly distributed in the interval $[k/d-\sqrt{\kappa k_+/d},k/d+\sqrt{\kappa k_+/d}]$. Note that $|I_v|/(2\sqrt{\kappa k_+}/d) \le 2\kappa/|S|$. %
By Claim \ref{claim:high-prob-2}, the number of $v\in S$ with $x(v)\in I_v$ is at most $4\kappa$ with probability at least $1-\exp(-\kappa/3) \ge 1-n^{-100}$. Hence, with probability at least $1-2n^{-100}$, there are at most $4\kappa$ neighbors of $u$ with degree $k$. 

Next consider the case $|k/d-1/2| \le \sqrt{\kappa k_+}/d$. %
Pick three consecutive intervals $J_1,J_2,J_3$ whose union contains $[k/d\pm \sqrt{\kappa k_+}/d]$, with the properties that $J_2$ has length at most $\kappa/d$ and $\frac{1}{2}\in J_2$, and $J_1$ and $J_3$ each has length $\sqrt{\kappa k_+}/d$. It follows that $1-x \notin J_j$ for $x\in J_j$ and $j=1,3$. Let $\mathcal{G}$ be the $\sigma$-algebra generated by $\mathbb{I}(x(v)\notin J_1\cup J_2 \cup J_3)x(v)$ for all $v\in N(u)$, and let $\mathcal{G}_j$ be the $\sigma$-algebra generated by $\mathbb{I}(x(v)\notin J_j)x(v)$. Condition on $\mathcal{G}$ and $\mathcal{G}_2,\mathcal{G}_3$ satisfying the following properties:
\begin{enumerate}
    \item All vertices $v$ in $N(u)$ with $H$-degree $k$ lie in $J_1\cup J_2\cup J_3$ with probability at least $1-n^{-50}$ conditional on $\mathcal{G}$.
    \item There are at most $2\kappa$ vertices $v\in N(u)$ with $x(v)\in J_2$, and at most $2 \sqrt{\kappa k_+}$ vertices with each of $x(v)\in J_1$ or $x(v)\in J_3$. 
    \item The values $x(v)$ for $v\in N(u)$ satisfying $x(v) \notin [k/d \pm \sqrt{\kappa k_+}/d]$ or $x(v)\in J_2\cup J_3$ partition $[0,1]\setminus J_1$ into consecutive intervals of length at most $\kappa/d$.
\end{enumerate}
As in Claim \ref{claim:properties-G}, we can verify that $\mathcal{G},\mathcal{G}_2,\mathcal{G}_3$ satisfy these properties with probability at least $1-n^{-100}$. 

Let $S$ be the set of vertices $v$ with $x(v) \in J_1$ (so they are the only remaining vertices with $x(v)$ not measurable in $\mathcal{G},\mathcal{G}_2,\mathcal{G}_3$). By the second property conditioned on above, $|S|\leq 2 \sqrt{\kappa k_+}$. 
By the third property above, the vertices $v$ with $x(v)\notin J_1$ partition $[0,1]\setminus J_1$ into consecutive intervals each of length at most $\kappa/d$. As before, since $1-x(v)\notin J_1$ for any $v\in S$, for each $v\in S$, conditional on $\mathcal{G}$, $\mathcal{G}_2$, and $\mathcal{G}_3$, there is at most one interval $I_v$ in this partition for which $v$ can have $H$-degree $k$ only if $x(v) \in I_v$. As before, $|I_v|/|J_1|\le 2\kappa/|S|$, and by Claim \ref{claim:high-prob-2}, the number of $v\in S$ with $x(v)\in I_v$ conditional on $\mathcal{G},\mathcal{G}_2,\mathcal{G}_3$ is at most $4\kappa$ with probability at least $1-n^{-100}$. %
Thus, the number of vertices with $x(v)\in J_1$ and $H$-degree $k$ is at most $4\kappa$ with probability at least $1-n^{-100}$. A similar argument shows that the number of vertices with $x(v)\in J_3$ and $H$-degree $k$ is at most $4\kappa$ with probability at least $1-n^{-100}$. Finally, since there are at most $2\kappa$ vertices $v\in N(u)$ with $x(v)\in J_2$, with probability at least $1-3n^{-100}$, there are at most $10\kappa$ neighbors of $u$ with $H$-degree $k$, completing the proof. %
\end{proof}

\subsection{The variance proxy}

In this subsection, we give the proof of Lemma \ref{lem:var-proxy}. Recall that 
\begin{equation}\label{eq:Mn}
M_n = \sum_{j=1}^{n} \mathbb{E}[Y_j^2 | \mathcal{F}_{j-1}].
\end{equation}
We first develop several preliminary estimates that will be useful for the proof of Lemma \ref{lem:var-proxy}. We have that
\begin{align*}
    \mathbb{E}[Y_j^2 \mid \mathcal{F}_{j-1}] &= \mathbb{E}\left[\left(\sum_{u \in N(j)}( \mathbb{E}[\II_u \mid \mathcal{F}_{j}]-\mathbb{E}[\II_u \mid \mathcal{F}_{j-1}])\right)^2 \bigg | \mathcal{F}_{j-1}\right].
\end{align*}
For an integer $h\in [0,t]$, define 
\[
p(x,t,h) = \binom{t}{h} x^{h} (1-x)^{t-h}. 
\]
If $h < 0$ or $h>t$ then by convention we set $p(x,t,h)=0$.
Note that $p(x,t,h)$ is the probability that $u$ has degree $k$ conditioned on the events that $x(u)=x$, and among $d-t$ revealed neighbors $v$ of $u$ in $G$, there are exactly $k-h$ neighbors with $x(v) \ge 1-x$. %

For each $j$, let $t(u,j)$ be the number of neighbors of $u$ that arrive after $j$ in the ordering. 
We then have for $u$ adjacent to $j$ in $G$ that 
\[
\mathbb{E}[\II_u \mid \mathcal{F}_{j}] = \mathbb{E}[p(x(u), t(u,j), k-h(u, x(u),j)) \mid \mathcal{F}_{j}],
\]
where $h(u,x(u),j)$ is the number of neighbors $v$ of $u$ arriving by time $j$ with $x(v)\ge 1-x(u)$. In particular, if $u\le j$, then 
\[
\mathbb{E}[\II_u \mid \mathcal{F}_{j}] = p(x(u), t(u,j), k-h(u, x(u), j)).
\]

Note that for a neighbor $u$ of $j$, $u$ has degree $k$ if $x(j) \ge 1-x(u)$ and $k-1-h(u,x(u),j-1)$ neighbors $v'$ of $u$ coming after $j$ have $x(v')\ge 1-x(u)$; or $x(j)< 1-x(u)$ and $k-h(u,x(u),j-1)$ neighbors $v'$ of $u$ coming after $j$ have $x(v')\ge 1-x(u)$. Thus,
\begin{align*}
\mathbb{E}[\II_u \mid \mathcal{F}_{j-1}] &= \mathbb{E}[p(x(u), t(u,j), k-h(u, x(u),j-1)) \cdot (1-x(u)) \mid \mathcal{F}_{j-1}] \\
&\qquad \qquad + \mathbb{E}[p(x(u), t(u,j), k-1-h(u, x(u),j-1)) \cdot x(u) \mid \mathcal{F}_{j-1}],
\end{align*}
while 
\begin{align*}
\mathbb{E}[\II_u \mid \mathcal{F}_{j}] &= \mathbb{E}[p(x(u), t(u,j), k-h(u, x(u),j-1)) \cdot \mathbb{I}(x(j) < 1-x(u)) \mid \mathcal{F}_{j-1}] \\
&\qquad \qquad + \mathbb{E}[p(x(u), t(u,j), k-1-h(u, x(u),j-1)) \cdot \mathbb{I}(x(j) \ge 1-x(u)) \mid \mathcal{F}_{j-1}].
\end{align*}
Thus, if $j$ is adjacent to $u$ in $G$, 
\begin{align}
    &\mathbb{E}[\II_u \mid \mathcal{F}_{j}] - \mathbb{E}[\II_u \mid \mathcal{F}_{j-1}] \nonumber\\
    &= \mathbb{E}[(p(x(u), t(u,j), k-1-h(u, x(u),j-1))-p(x(u), t(u,j), k-h(u, x(u),j-1)))\nonumber\\
    &\qquad \qquad \qquad \qquad \qquad \qquad \qquad \qquad \cdot (\mathbb{I}(x(j) \ge 1-x(u))-x(u)) \mid \mathcal{F}_{j-1}]. \label{eq:Iu}
\end{align}

Let $\delta_k(j,u) = p(x(u), t(u,j), k-1-h(u, x(u),j-1))-p(x(u), t(u,j), k-h(u, x(u),j-1))$. Note that $h(u,x(u),j-1)$ is $\sigma(\mathcal{F}_{j-1},x(u))$-measurable. In particular, if $u\le j-1$, then $\delta_k(j,u)$ is $\mathcal{F}_{j-1}$-measurable, and otherwise it is determined by $\mathcal{F}_{j-1}$ and the (independent) choice of $x(u)$, and we have 
\begin{align}
\mathbb{E}[Y_j^2 \mid \mathcal{F}_{j-1}] &= \mathbb{E}\left[\left(\sum_{u\in N(j)} \delta_{k}(j,u) (\mathbb{I}(x(j) \ge 1-x(u))-x(u))\right)^2 \bigg | \mathcal{F}_{j-1}\right] \nonumber \\
&\le 2\mathbb{E}_{x(j)}\left[\left(\sum_{u\in N(j), u\le j-1} \delta_{k}(j,u) (\mathbb{I}(x(j) \ge 1-x(u))-x(u))\right)^2\right] \nonumber \\
&\qquad \qquad + 2\mathbb{E}_{x(j),x(u)}\left[\left(\sum_{u\in N(j), u> j-1} \delta_{k}(j,u) (\mathbb{I}(x(j) \ge 1-x(u))-x(u))\right)^2\right], \label{eq:Yjdecomp}
\end{align}
where in the first summand we are averaging over the uniform random variable $x(j)$, and in the second summand we are averaging over independent $x(j)$ and $x(u)$ for all $u>j-1,u\in N(j)$. 

Let 
\begin{equation}
    A_1(j):=\mathbb{E}_{x(j)}\left[\left(\sum_{u\in N(j), u\le j-1} \delta_{k}(j,u) (\mathbb{I}(x(j) \ge 1-x(u))-x(u))\right)^2\right], \label{eq:A_1}
\end{equation}
and let 
\begin{equation}
    A_2(j) := \mathbb{E}_{x(j),x(u)}\left[\left(\sum_{u\in N(j), u> j-1} \delta_{k}(j,u) (\mathbb{I}(x(j) \ge 1-x(u))-x(u))\right)^2\right].\label{eq:A_2}
\end{equation}
 
In the next subsection, we will derive some useful bounds on $\delta_k(j,u)$ before turning to the upper bounds for $\sum_j A_1(j)$ and $\sum_j A_2(j)$, which complete the proof of Lemma \ref{lem:var-proxy}.%

\subsubsection{Bounds on $\delta_k(j,u)$}

We will now derive some useful bounds for $\delta_{k}(j,u)$ for fixed $u$. For convenience in notation, let $h = k-1-h(u,x(u),j-1)$, $x=x(u)$ and $t=t(u,j-1)$. Note that $m! = \Theta(\sqrt{m} (m/e)^m)$ for all $m\ge 1$ from Stirling's approximation. Letting $\alpha = h/(t-1)$, when $0<h<t-1$, we have %
\begin{align} \label{eq:stirling}
    |\delta_{k}(j,u)| &= |p(x,t,h)-p(x,t,h+1)| \nonumber \\ 
    &= \frac{t!}{(h+1)!(t-h)!} x^h (1-x)^{t-h-1} \left | (1-x)(h+1)-x(t-h)\right | \nonumber \\ 
    &\ll \frac{|t(\alpha-x)+1-\alpha-x|}{(\alpha(1-\alpha)t)^{3/2}} \exp\big([\alpha \log(x/\alpha)+(1-\alpha)\log((1-x)/(1-\alpha))](t-1)\big) \nonumber \\
    &\ll \frac{|t(\alpha-x)|+3}{(\alpha(1-\alpha)t)^{3/2}} \exp\big([\alpha \log(x/\alpha)+(1-\alpha)\log((1-x)/(1-\alpha))](t-1)\big).
\end{align}

We also have the trivial bound $|\delta_k(j,u)| \le 1$, since $\delta_k(j,u)$ is the difference of two probabilities. 

\begin{claim}
\label{claim:fbound}
There exists an absolute constant $c>0$ such that the following holds. For each $x\in [0,1]$, the function $f(\alpha) = \alpha \log(x/\alpha)+(1-\alpha)\log((1-x)/(1-\alpha))$ satisfies 
\begin{equation}
    f(\alpha) \le -c \min(|x-\alpha|, (1/x+1/(1-x))(x-\alpha)^2).\label{eq:fbound}
\end{equation}
\end{claim}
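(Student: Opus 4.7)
My approach is to recognize that $-f(\alpha)$ is exactly the Bernoulli Kullback--Leibler divergence $D(\alpha\|x) := \alpha\log(\alpha/x)+(1-\alpha)\log((1-\alpha)/(1-x))$, so the claim is a Pinsker/Bernstein-style lower bound on this divergence. The plan is to establish the two halves of the min separately by splitting according to whether $|\alpha-x| \leq x(1-x)$ (small deviation, where the quadratic term dominates) or $|\alpha-x| > x(1-x)$ (large deviation, where the linear term dominates); note that $1/x + 1/(1-x) = 1/(x(1-x))$, so the second term inside the min is just $(x-\alpha)^2/(x(1-x))$.

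In the small-deviation regime, I would apply Taylor's theorem using $D(x\|x) = 0$, $\partial_\alpha D(\alpha\|x)|_{\alpha = x} = 0$, and $\partial_\alpha^2 D(\alpha\|x) = 1/(\alpha(1-\alpha))$ to write
\[
-f(\alpha) \;=\; \int_x^\alpha \int_x^\beta \frac{d\gamma\,d\beta}{\gamma(1-\gamma)}.
\]
The interval of integration is contained in $[x^2, x(2-x)]$, on which $\gamma(1-\gamma) \leq x(2-x)\cdot(1-x)(1+x) \leq (9/4)\,x(1-x)$; integrating then gives $-f(\alpha) \geq \tfrac{2}{9}(\alpha-x)^2/(x(1-x))$, which is of the desired form.

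In the large-deviation regime, applying $\log t \leq t - 1$ at $t = (1-x)/(1-\alpha)$ yields $(1-\alpha)\log((1-\alpha)/(1-x)) \geq x - \alpha$, hence $-f(\alpha) \geq \alpha\log(\alpha/x) + x - \alpha = x\,h(\alpha/x)$, where $h(r) := r\log r - r + 1 \geq 0$. The symmetry $D(\alpha\|x) = D(1-\alpha\|1-x)$ gives the companion $-f(\alpha) \geq (1-x)\,h((1-\alpha)/(1-x))$. A short derivative computation shows that $h(r)/(r-1)$ is nondecreasing on $(1,\infty)$ and $h(r)/(1-r)$ is nonincreasing on $(0,1)$. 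Splitting into four sub-cases by the sign of $\alpha - x$ combined with whether $x \leq 1/2$, I would use whichever of the two bounds puts the relevant ratio into its favorable range. For example, when $\alpha > x$ and $x \leq 1/2$, we get $r := \alpha/x \geq 2-x \geq 3/2$, and monotonicity gives $h(r)/(r-1) \geq h(3/2)/(1/2) = 3\log(3/2) - 1$, whence $-f(\alpha) \geq (3\log(3/2)-1)(\alpha-x)$.

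The main obstacle is that neither $xh(\alpha/x)$ nor $(1-x)h((1-\alpha)/(1-x))$ alone produces a uniform linear lower bound on $[0,1]$: the first degenerates as $x \to 1$ with $\alpha > x$, and the second as $x \to 0$ with $\alpha < x$. The fix is to switch bounds at $x = 1/2$ so that the relevant ratio $h(r)/|r-1|$ always sits in its favorable monotonicity regime. Modulo that symmetric case split the remaining computations are elementary, and combining the two regimes yields the claim with any $c \leq \min\bigl(2/9,\, 3\log(3/2) - 1\bigr)$.
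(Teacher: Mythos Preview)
Your argument is correct. Both your proof and the paper's share the same two-regime template: a second-order Taylor expansion of $f$ at $\alpha=x$ handles the quadratic bound near $x$, and a direct estimate handles the linear bound far from $x$. The executions differ. The paper reduces by the symmetry $x\mapsto 1-x$ to $x\le 1/2$ and splits at $\alpha=2x$; for $\alpha>2x$ it simply uses concavity of $f$ to get the linear bound in one line via the tangent at $2x$, namely $f(\alpha)\le f(2x)+(\alpha-2x)f'(2x)$, with $f(2x)\le -c_1 x$ and $f'(2x)\le -c_2$. You instead split at $|\alpha-x|=x(1-x)$, use the integral form of the Taylor remainder (rather than the Lagrange form the paper uses), and for the linear regime pass to the auxiliary function $h(r)=r\log r-r+1$ together with the monotonicity of $h(r)/(r-1)$, which forces a four-way case split. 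Your route makes the KL-divergence structure explicit and produces an explicit constant $c=3\log(3/2)-1$; the paper's concavity argument is shorter and avoids the sub-cases but leaves the constants implicit. One small point: your stated constant $\min(2/9,\,3\log(3/2)-1)$ tacitly uses that the other two sub-cases give the larger constant $1-\log 2$, which you should note.
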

\begin{proof}
Note that 
\[
f'(\alpha) = \log \frac{x}{\alpha} - \log \frac{1-x}{1-\alpha}, \qquad f''(\alpha) = - \frac{1}{\alpha} - \frac{1}{1-\alpha}.
\]

Thus, by Taylor's theorem, noting that $f(x)=f'(x)=0$,
\begin{equation}
f(\alpha) = f(x)+f'(x)(\alpha-x)+ f''(\alpha')(\alpha-x)^2 = -\left(\frac{1}{\alpha'}+\frac{1}{1-\alpha'}\right) (x-\alpha)^2,\label{eq:secondorder}
\end{equation}
for some $\alpha'$ between $\alpha$ and $x$. %

By symmetry about $1/2$, we can assume without loss of generality that $x\le 1/2$. %
If $\alpha \le 2x$, then $\frac{1}{\alpha'}+\frac{1}{1-\alpha'} \ge c\left(\frac{1}{x}+\frac{1}{1-x}\right)$ for any $\alpha'$ between $\alpha$ and $x$, so (\ref{eq:secondorder}) implies (\ref{eq:fbound}). Otherwise, $\alpha>2x$, in which case we can verify that for some constants $c_1,c_2>0$,
\[
f(\alpha) = f(2x)+f(\alpha)-f(2x) \le f(2x) + (\alpha-2x)f'(2x) \le - c_1x - c_2 (\alpha-2x) \le -\min(c_1,c_2) |x-\alpha|. \qedhere
\]
\end{proof}

\subsubsection{Bounding $\sum_j A_1(j)$ and $\sum_j A_2(j)$}

Recall that $$A_1(j)=\mathbb{E}_{x(j)}\left[\left(\sum_{u\in N(j), u\le j-1} \delta_{k}(j,u) (\mathbb{I}(x(j) \ge 1-x(u))-x(u))\right)^2\right].$$

\begin{claim}\label{claim:high-prob}
\begin{enumerate}
    \item[(a)] With probability at least $1-n^2\exp(-c\kappa)$ over the random permutation $\pi$, we have that 
    \begin{equation}
        \left||N(v)\cap [j]| - d\frac{j}{n}\right|\le  \max\left(\sqrt{\kappa \frac{d}{n}\min(n-j,j)},\kappa\right) \label{eq:Nv}
    \end{equation} for all $v\in [n]$ and $j\in [n]$.
    \item[(b)] With probability at least $1-n^4\exp(-c\kappa)$ (over the randomness of all $x(u)$ with $u\in [n]$), we have that for any interval $I\subseteq [0,1]$ of length at least $2/d$, 
    \begin{equation}
        \big|\left|\{z\in N(v)\cap [j]:x(z)\in I\}\right| - d|I|j/n\big| \le e_{|I|} \label{eq:xz}
    \end{equation} %
    for all $v\in [n]$ and $j\in [n]$, where $$e_{r} = \kappa + \sqrt{\kappa drj/n}.$$%
\end{enumerate}
\end{claim}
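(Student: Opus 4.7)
The plan is to prove both parts via Chernoff-type concentration combined with a union bound, using the randomness of $\pi$ for part (a) and of $\{x(u)\}$ for part (b).

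For part (a), fix a vertex $v$ and index $j$. Under the uniform random permutation $\pi$, $|N(v) \cap [j]|$ is hypergeometric with mean $\mu = dj/n$; by Hoeffding's theorem (hypergeometric is at least as concentrated as binomial under convex tests), we may apply the binomial Chernoff bound
\[
\Pr\bigl[\,|X - \mu| \ge t\,\bigr] \le 2\exp\!\left(-\frac{ct^2}{\mu + t}\right).
\]
Taking $t = \max(\sqrt{\kappa dj/n}, \kappa)$ gives failure probability at most $2\exp(-c\kappa)$. By the symmetry $|N(v) \cap [j]| = d - |N(v) \cap ([n] \setminus [j])|$, applying the same bound with $n-j$ in place of $j$ yields the stated version with $\min(j, n-j)$. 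A union bound over the $n^2$ pairs $(v,j)$ then gives part (a).

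For part (b), first condition on $\pi$ and on the event from part (a). Fix $v$, $j$, and a single interval $I$ with $|I| \ge 2/d$. Conditional on $\pi$, the weights $\{x(z) : z \in N(v) \cap [j]\}$ are iid uniform on $[0,1]$, so $|\{z \in N(v)\cap[j] : x(z)\in I\}|$ is binomial with mean $m|I|$ for $m := |N(v)\cap[j]|$. The binomial Chernoff bound controls its deviation from $m|I|$ by $\sqrt{\kappa m|I|} + \kappa$ with probability $1 - 2\exp(-c\kappa)$, and part (a) implies $\bigl|m|I| - d|I|j/n\bigr| \le |I|\cdot\max(\sqrt{\kappa dj/n}, \kappa)$. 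Using $|I| \le 1$, both quantities are of order $e_{|I|}$, so after adjusting the Chernoff constant the total deviation from $d|I|j/n$ is at most $e_{|I|}$.

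To handle all intervals simultaneously, discretize: let $\mathcal{I}$ be the intervals with endpoints at multiples of $1/n^{10}$, so $|\mathcal{I}| \le n^{20}$. A union bound over the at most $n^{22}$ triples $(v, j, I)$ with $I \in \mathcal{I}$ and $|I| \ge 2/d$ establishes (\ref{eq:xz}) for all such $I$. For an arbitrary $I$ with $|I| \ge 2/d$, sandwich $I^- \subseteq I \subseteq I^+$ with $I^\pm \in \mathcal{I}$ and $|I^+ \setminus I^-| \le 4/n^{10}$; the count for $I$ then differs from those for $I^\pm$ by at most the number of $z \in [n]$ with $x(z) \in I^+ \setminus I^-$, which is $O(1)$ with probability $1 - n^{-\Omega(1)}$ by Chernoff applied to any length-$(4/n^{10})$ region of $[0,1]$ (mean $\le 4/n^9$). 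This discretization error is absorbed into the $\kappa$-term of $e_{|I|}$. The main obstacle is precisely this discretization: the net must be fine enough that the boundary error in counts is dominated by $\kappa$, yet coarse enough that the polynomial union-bound factor $n^{O(1)}$ can be absorbed into $\exp(-c\kappa)$ by taking the constant in $\kappa = C\log n$ sufficiently large.
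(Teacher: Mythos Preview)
Your argument for (a) is the same as the paper's. For (b) the paper takes a different route: rather than conditioning on (a) and adding two error terms, it computes the conditional MGF of the count given $\pi$ (a binomial MGF, with exponent $|N(v)\cap[j]|$), then applies Hoeffding's theorem to the convex map $y\mapsto a^y$ to average over $\pi$, obtaining that the unconditional MGF is dominated by that of $\mathrm{Binomial}(d,|I|j/n)$. This yields Chernoff bounds centered directly at $d|I|j/n$ in a single step, with no need to invoke (a). The paper also discretizes only at scale $1/d$ (this is what produces the $n^4$ factor) and extends to arbitrary $I$ by sandwiching and monotonicity of counts, rather than by bounding boundary points on a fine grid.

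Your two-step route is workable, but the clause ``after adjusting the Chernoff constant the total deviation \ldots is at most $e_{|I|}$'' is not correct as written: the term $|I|\cdot\max(\sqrt{\kappa dj/n},\kappa)$ inherited from (a) is not controlled by any Chernoff parameter, and for $|I|$ close to $1$ it already equals $\sqrt{\kappa d|I|j/n}$, leaving no room for the binomial fluctuation (which cannot be forced below $\kappa$ with probability $1-\exp(-c\kappa)$ when the mean is $\gg\kappa$). The easy repair is to prove and condition on a sharpened version of (a) with deviation $\tfrac12\max(\sqrt{\kappa dj/n},\kappa)$---the same Chernoff argument gives this at the cost of shrinking $c$---which restores the needed slack. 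Separately, your mesh $n^{-10}$ inflates the union-bound prefactor well beyond the stated $n^4$; this is harmless for the downstream application (absorb the extra polynomial into $c$) but does not literally match the claim.
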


\begin{proof}
We first prove the bound in (a). Note that under the random permutation $\pi$, $|N(v)\cap [j]|$ follows the hypergeometric distribution with parameters $(n,d,j)$. By Hoeffding's Theorem (Theorem 4 of \cite{Hoeffding}, that the hypergeometric distribution concentrates at least as much as the corresponding binomial distribution), we have that $$\mathbb{P}\left(\left||N(v)\cap [j]| - d\frac{j}{n}\right| > \theta d\frac{j}{n}\right) \le \exp(-\min(\theta/3,\theta^2/2)dj/n).$$
In particular, if $\kappa < \sqrt{dj/n}$, then $$\mathbb{P}\left(\left||N(v)\cap [j]| - d\frac{j}{n}\right| >  \sqrt{\kappa dj/n}\right) \le \exp(-c\kappa).$$
If $\kappa\ge \sqrt{dj/n}\ge 1$, then 
$$\mathbb{P}\left(\left||N(v)\cap [j]| - d\frac{j}{n}\right| > \kappa \sqrt{dj/n}\right) \le \exp(-c\kappa \sqrt{dj/n})\le \exp(-c\kappa).$$
Finally, if $\kappa\geq \sqrt{dj/n}$ and $\sqrt{dj/n}<1$, then 
$$\mathbb{P}\left(\left||N(v)\cap [j]| - d\frac{j}{n}\right| > \kappa \right) \le \exp(-c\kappa).$$
Hence, in all cases, 
$$\mathbb{P}\left(\left||N(v)\cap [j]| - d\frac{j}{n}\right| >  \max(\sqrt{\kappa dj/n},\kappa) \right) \le \exp(-c\kappa).$$
Finally, one has $|N(v)\cap [j]|=d-|N(v)\cap [j+1,n]|$, and an identical argument gives that 
$$\mathbb{P}\left(\left||N(v)\cap [j]| - d\frac{j}{n}\right| >  \max(\sqrt{\kappa d(n-j)/n},\kappa) \right) \le \exp(-c\kappa).$$
The final bound follows from the union bound over all $v$ and $j$.

Next, we prove (b). Let $|N(v)\cap [j]|=N_{v,j}$. Conditional on $\pi$, the random variable $N_{v,j,\ell,\ell'}:=|\{z\in N(v)\cap [j]:x(z)\in [\ell/d,\ell'/d]\}|$ is a binomial random variable with parameters $(N_{v,j},(\ell'-\ell)/d)$, so 
\[
\mathbb{E}\left[\exp(\lambda N_{v,j,\ell,\ell'}) | \pi \right] = \left(\frac{\ell'-\ell}{d}(e^{\lambda}-1)+1\right)^{|N_{v,j}|}.
\]
Next, we note that the function $y\mapsto \left(\frac{\ell'-\ell}{d}(e^{\lambda}-1)+1\right)^y$ is convex, so by Hoeffding's Theorem (Theorem 4 of \cite{Hoeffding}), we have
\begin{align*}
\mathbb{E}\left[\left(\frac{\ell'-\ell}{d}(e^{\lambda}-1)+1\right)^{|N_{v,j}|}\right] &\le \left(\frac{j}{n}\left(\frac{\ell'-\ell}{d}(e^{\lambda}-1)+1\right)+1-\frac{j}{n}\right)^d\\
&= \mathbb{E}\left[\exp(\lambda Z)\right],
\end{align*}
where $Z$ is binomially distributed with parameters $(d,\frac{j(\ell'-\ell)}{dn})$. This yields Chernoff-type bounds for concentration of $N_{v,j,\ell,\ell'}$, from which similar to the proof of (a), we obtain that with probability at least $1-\exp(-c\kappa)$, 
\begin{equation}\label{eq:Nvr1}
\left||\{z\in N(v)\cap [j]:x(z)\in [\ell/d,\ell'/d]\}|-j(\ell'-\ell)/n\right| \le  \frac{1}{2} \max\left(\kappa,\sqrt{\kappa j(\ell'-\ell)/n}\right).
\end{equation}
By the union bound, we can guarantee this property for all $v,j,\ell,\ell'$ with probability at least $1-n^4\exp(-c\kappa)$. 

The final conclusion follows upon noticing that, given any interval $I$ of length at least $2/d$, $I$ is contained in an interval $[\ell/d,\ell'/d]$ of length $(\ell'-\ell)/d\le |I|+1/d$, and $I$ contains an interval $[\tilde{\ell}/d,\tilde{\ell}'/d]$ of length $(\tilde{\ell}'-\tilde{\ell})/d \ge |I|-1/d$.

\end{proof}

Recall that $\kappa = C \log n$ and $k_+ = C\max(k,\kappa)$ for a sufficiently large constant $C$. In the rest of the section, we will work under the event $\mathcal{E}$ that the events in Claim \ref{claim:high-prob} hold. Let $k'=\max(k,\kappa)$.
 
\begin{lemma} \label{lem:x-offrange}
Let $n$ be sufficiently large and $j\le n-C\kappa n/d$. We have $$\sum \delta_k(j,u) < n^{-10},$$
where the summation ranges over $u\in N(j)$ with $u\le j-1$ such that $|x(u)-k/d|> \sqrt{\kappa k_+}/d$ %
or $|h/(t-1)-x(u)| > \max\big(\sqrt{\kappa k_+}/(d\sqrt{1-j/n}),\kappa/(d(1-j/n))\big)$. 
\end{lemma}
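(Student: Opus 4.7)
The plan is to show that $|\delta_k(j,u)| \le n^{-11}$ for every $u$ in the bad set; summing over the at most $d \le n$ such $u$ then gives the claimed bound $n^{-10}$ (since $|\sum \delta_k| \le \sum |\delta_k|$). The two main tools are the Stirling bound \eqref{eq:stirling}, which expresses $|\delta_k(j,u)|$ as a polynomial factor times $\exp(f(\alpha)(t-1))$, and Claim \ref{claim:fbound}, which lower-bounds $-f(\alpha)$ in terms of $|\alpha - x|$. I would work throughout under the high-probability event $\mathcal{E}$ of Claim \ref{claim:high-prob}, so that both $t = t(u,j)$ and $h(u,x(u),j-1)$ concentrate on their means.

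Under $\mathcal{E}$, set $q = 1 - j/n$. The hypothesis $j \le n - C\kappa n/d$ guarantees $dq \ge C\kappa$, so Claim \ref{claim:high-prob}(a) gives $t = dq(1+o(1))$; and for $x(u) \ge 2/d$, Claim \ref{claim:high-prob}(b) applied to $[1-x(u),1]$ gives $h(u,x(u),j-1) = dx(u)(1-q)(1+o(1))$. Writing $x = x(u)$ and $\delta = x - k/d$, a direct computation of $\alpha = (k-1-h(u,x,j-1))/(t-1)$ then yields
\[
\alpha - x = -\frac{\delta}{q}(1+o(1))
\]
whenever the ``signal'' $d|\delta|$ dominates the concentration errors $e_{x(u)}$ and $O(\sqrt{\kappa dq})$. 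In particular, the Case A hypothesis $|\delta| > \sqrt{\kappa k_+}/d$ gives $|\alpha - x| \ge c\sqrt{\kappa k_+}/(dq)$, which already exceeds the Case B threshold $\max(\sqrt{\kappa k_+}/(d\sqrt{q}), \kappa/(dq))$ (using $1/q \ge 1/\sqrt q$ and $\sqrt{\kappa k_+} \ge \kappa$, since $k_+ \ge \kappa$). So it suffices to prove $|\delta_k(j,u)| \le n^{-11}$ assuming only the Case B bound on $|\alpha - x|$.

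Next, combine this bound with Claim \ref{claim:fbound}. The linear part of $\min(|\alpha-x|, (\alpha-x)^2/(x(1-x)))$ gives $(t-1)|\alpha-x| \ge dq \cdot \kappa/(dq) = \kappa$, while the quadratic part gives $(t-1)(\alpha-x)^2/(x(1-x)) \ge \kappa k_+/(dx(1-x))$. When Claim \ref{claim:fbound}'s active regime is the linear one ($|\alpha-x| \ge x(1-x)$), the linear bound alone yields $f(\alpha)(t-1) \le -c\kappa$. When instead the active regime is quadratic ($|\alpha-x| < x(1-x)$), the constraint $|\delta|/q < x(1-x)$ combined with the Case A / Case B lower bounds confines $x$ close enough to $k/d$ that $dx(1-x) \lesssim k_+$, which makes the quadratic bound at least $c\kappa$ as well. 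In either regime, $f(\alpha)(t-1) \le -c\kappa$, and plugging into \eqref{eq:stirling} (with the trivial bound $|\delta_k|\le 1$ absorbing any polynomial prefactor blowup near $\alpha\in\{0,1\}$) yields $|\delta_k(j,u)| \le \mathrm{poly}(n) \cdot e^{-c\kappa} \le n^{-11}$ for $C$ large enough.

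The main obstacle I expect is cleanly verifying that the quadratic regime forces $dx(1-x) \lesssim k_+$ in all subcases, particularly when $x$ is moderately far from $k/d$ and from $1/2$; the analysis there may require invoking Claim \ref{claim:fbound}'s linear bound (which is not tight for $\alpha$ very far from $x$) in place of the quadratic, and keeping careful track of how the Case A and Case B thresholds interact with the $|\alpha - x| \lessgtr x(1-x)$ dichotomy. A secondary obstacle is the edge case $x(u) < 2/d$, where Claim \ref{claim:high-prob}(b) does not apply directly; here one can either appeal to a Chernoff bound for the Binomial variable $h(u,x(u),j-1)$ (whose mean is $O(1)$, so $h = O(\log n)$ with high probability) or observe that the corresponding $\delta_k(j,u)$ values vanish by the boundary convention $p(x,t,h)=0$ for $h\notin[0,t]$.
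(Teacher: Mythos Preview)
Your plan matches the paper's: show $|\delta_k(j,u)| \le n^{-O(1)}$ for each bad $u$ by combining the Stirling bound \eqref{eq:stirling} with Claim~\ref{claim:fbound}, after using Claim~\ref{claim:high-prob} to pin down $t \approx dq$ and $\alpha - x \approx -\delta/q$.

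The obstacle you flagged is real, and your reduction of Case~A to Case~B does not survive it. Retaining only the Case~B lower bound $|\alpha-x| \ge c\max\big(\sqrt{\kappa k_+}/(d\sqrt q),\,\kappa/(dq)\big)$ gives $(t-1)(\alpha-x)^2/(x(1-x)) \gg \kappa k_+/(dx(1-x))$, and this can be $o(\kappa)$ when $x$ is far from $k/d$: for $k=0$ and $x=1/2$ one has $k_+ = C\kappa$ while $dx(1-x) = d/4$, and $d = \omega(\kappa)$. So ``$dx(1-x) \lesssim k_+$'' is simply false there. The paper therefore keeps the two cases separate. In its Case~1 ($|x-k/d| \ge \eta_0$) it retains the full strength $|\alpha-x| \gg |\delta|/q$ (not just the Case~B threshold) and verifies the quadratic condition $(t-1)(\alpha-x)^2/x \gg d\delta^2/(qx) \gg \kappa$ via a further split on whether $\eta \lessgtr (k+\kappa)/d$; in its Case~2 ($|x-k/d| \le \eta_0$) it uses that extra hypothesis to get $x \ll k_+/d$, which makes $dx(1-x) \ll k_+$ exactly as you wanted. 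The fix is thus simply not to discard the Case~A information after extracting the $|\alpha-x|$ bound. Your edge case $x(u)<2/d$ is a non-issue: enclose the short interval in one of length $2/d$ and replace $e_{x(u)}$ by $e_{2/d} \le 2\kappa$.
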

\begin{proof}
Under the event $\mathcal{E}$, Claim~\ref{claim:high-prob}(b) tells us that %
$$\left| h(u,x(u),j-1)-x(u)(j-1)d/n\right|\le e_{x(u)},$$ so \begin{equation}\label{eq:h}\left|h - (k-1-x(j-1)d/n)\right|\le e_{x}.\end{equation}
By Claim~\ref{claim:high-prob}(a),
\begin{equation}\label{eq:t}\left|t - (d - d(j-1)/n)\right| \le  \max(\kappa, \sqrt{\kappa d\min(n-(j-1),j-1)/n}).\end{equation}

From (\ref{eq:h}) and (\ref{eq:t}), we have \begin{equation} \label{eq:rangeh-x} h/(t-1)-x \in \left[\frac{M_1-E_1}{M_2+E_2},\frac{M_1+E_1}{M_2-E_2}\right],\end{equation} where
\begin{align*}
M_1 &= k-1-x(d-1), \,\, & M_2 &= d-1-d(j-1)/n, \\
E_1 &=  e_x +  x (\kappa+\sqrt{\kappa d\min(n-(j-1),j-1)/n}), \,\, & E_2 &= \kappa+\sqrt{\kappa d\min(n-(j-1),j-1)/n}.
\end{align*}

Note that 
\[
E_2 \le \kappa + \sqrt{\kappa d(n-j+1)/n} \le \kappa +  \sqrt{\kappa(M_2+1)} \le M_2/2,
\]
where we use the fact that $M_2 \ge C\kappa-1$ for $j\le n-C\kappa n/d$. This, along with \eqref{eq:t}, implies that
\begin{equation}
    t \geq M_2+1-E_2 = 1+\frac{1}{2}M_2> \frac{1}{2} d(1-(j-1)/n). \label{eq:eqt}
\end{equation}
Furthermore, recall that
\[
e_x = \kappa+\sqrt{\kappa dx(j-1)/n}. %
\]

Let $c$ be the constant in Claim \ref{claim:fbound} and let $C'=cC/6400$. Assume that $C$ is sufficiently large so $C'>1$. By Claim~\ref{claim:fbound}, if $|h/(t-1)-x|>\frac{1}{64C'}\kappa/(d(1-j/n))$ and $|h/(t-1)-x|^2/x >\frac{1}{64C'} \kappa/(d(1-j/n))$, %
then 
\begin{align}
&\exp\big([\alpha \log(x/\alpha)+(1-\alpha)\log((1-x)/(1-\alpha))](t-1)\big) \nonumber \\
&\le \exp\left(-c d(1-j/n) \min\left(|h/(t-1)-x|, \frac{1}{x}|h/(t-1)-x|^2\right)\right) \nonumber \\
&< n^{-15} \label{eq:poly-small}, 
\end{align}
upon choosing the constant $C$ in the definition of $\kappa$ sufficiently large. 

We will next show that the conditions of (\ref{eq:poly-small}) (and hence the conclusion) hold if $|x-k/d|>\sqrt{\kappa k_+}/d$ or $|h/(t-1)-x|>\max\big(\sqrt{\kappa k_+}/(d\sqrt{1-j/n}),\kappa/(d(1-j/n))\big)$. Given this, by (\ref{eq:stirling}), we have that for $u\in N(j)$ with $u\le j-1$, and either $|x(u)-k/d|\ge \sqrt{\kappa k_+}/{d} \ge \sqrt{C'\kappa k'}/{d}$ or $|h/(t-1)-x(u)| > \max\big(\sqrt{\kappa k_+}/(d\sqrt{1-j/n}),\kappa/(d(1-j/n))\big)$, %
$$\delta_k(j,u) \ll \frac{|t(\alpha-x)|+3}{(\alpha(1-\alpha)t)^{3/2}} n^{-15} < n^{-12}.$$
Summing over at most $n$ such $u$'s, we obtain the desired bound. 

We now turn to verify the conditions of (\ref{eq:poly-small}) when $|x-k/d|>\sqrt{\kappa k_+}/d$ or $|h/(t-1)-x|>\max\big(\sqrt{\kappa k_+}/(d\sqrt{1-j/n}),\kappa/(d(1-j/n))\big)$. Let $\eta := |x-k/d|$, and let $\eta_0 := \sqrt{C'\kappa k'}/d$.

{\noindent \textbf{Case 1:}} $|x-k/d|=\eta \ge \eta_0$.

From \eqref{eq:rangeh-x}, we first claim that if $\eta \ge \eta_0$ then $|E_1|<|M_1|/2$. Indeed, if $x < 64k'/d$, then %
\begin{align*}
    |M_1| &> \eta d-2 \ge \sqrt{C' \kappa k'}-2, \,\, & %
    |E_1| &< 2(\kappa+\sqrt{\kappa d x}) \le 2(\kappa + 8\sqrt{\kappa k'}) \le 18\sqrt{\kappa k'}. %
\end{align*} 
Otherwise, if $x\ge 64k'/d$, then $\eta < x$, and 
\begin{align*}
    |M_1| &> \frac{1}{2} d x, \,\, & %
    |E_1| &< 2(\kappa+\sqrt{\kappa dx}). %
\end{align*} 
In both cases, recalling that $k'=\max(k,\kappa)\ge \kappa$, we indeed have for a sufficiently large $C$ that $|E_1| < |M_1|/2$. Combining this estimate with the fact that $E_2\leq M_2/2$ yields
\begin{equation}
    4\eta/(1-j/n) > |h/(t-1)-x| > \frac{1}{4} \eta / (1-j/n). \label{eq:eqh}
\end{equation}
In particular, since $k'\ge \kappa$, we have 
\begin{equation}\label{eq:h2}
|h/(t-1)-x| > \frac{1}{4} \eta_0 / (1-j/n) = \frac{1}{4}\sqrt{C'\kappa k'}/(d(1-j/n)) \ge \frac{1}{64C'}\kappa /(d(1-j/n)). 
\end{equation}

For $\alpha = h/(t-1)$, if $|x-k/d|=\eta \ge \sqrt{C'\kappa k'}/d$, then 
\[
\frac{1}{x} |h/(t-1)-x|^2 \ge \frac{1}{16}\frac{1}{k/d+\eta} \frac{\eta^2}{(1-j/n)^2}.
\]
Thus, if $\eta > (k+\kappa)/d$, then %
\[
\frac{1}{x} |h/(t-1)-x|^2 \ge \frac{\eta}{32(1-j/n)^2} > \frac{k+\kappa}{32d(1-j/n)^2} > \frac{1}{64C'}\frac{\kappa}{d(1-j/n)},
\]
and otherwise if $\eta \le (k+\kappa)/d$, then 
\[
\frac{1}{x} |h/(t-1)-x|^2 \ge \frac{1}{16}\frac{d}{2k+\kappa} \frac{\eta_0^2}{(1-j/n)^2}\ge \frac{1}{16}\frac{C'k'}{2k+\kappa}\frac{\kappa}{d(1-j/n)^2}\ge \frac{1}{48}\frac{\kappa}{d(1-j/n)}>\frac{1}{64C'}\frac{\kappa}{d(1-j/n)}.
\]

The estimates above and (\ref{eq:h2}) yield that the conditions of (\ref{eq:poly-small}) hold when $|x-k/d|\ge \sqrt{C'\kappa k'}/d$. %

{\noindent \textbf{Case 2:}} $|x-k/d|\le \eta_0 = \sqrt{C'\kappa k'}/d$ and $|h/(t-1)-x| > \max\big(\sqrt{\kappa k_+}/(d\sqrt{1-j/n}),\kappa/(d(1-j/n))\big)$.

 Since $|x-k/d|\le \sqrt{C'\kappa k'}/d$, we have $x \le 2\max(k/d, \sqrt{C'\kappa k'}/d)$. Then 
\begin{align*}
\frac{1}{x}|h/(t-1)-x|^2 &\ge \frac{1}{2\max(k/d, \sqrt{C'\kappa k'}/d)} \cdot \frac{\kappa k_+}{d^2(1-j/n)} \ge \frac{1}{2C'(k+\kappa)/d} \cdot \frac{\kappa (\kappa+k)/2}{d^2(1-j/n)}\\
&>\frac{1}{64C'}\frac{\kappa}{d(1-j/n)}.
\end{align*}
Furthermore, 
\[
|h/(t-1)-x| > \frac{\kappa}{d(1-j/n)} > \frac{1}{64C'}\frac{\kappa}{d(1-j/n)}.
\]This yields the conditions of (\ref{eq:poly-small}) in this case and completes the proof of the lemma. 

\end{proof} 

We next return to the task of bounding $\sum_j A_1(j)$. %
Let $\mathcal{U}_j$ be the set of $u$ with $|x(u)-k/d| \le \sqrt{\kappa k_+}/d$ and $|h/(t-1)-x(u)| \le \max\big(\sqrt{\kappa k_+}/(d\sqrt{1-j/n}),\kappa/(d(1-j/n))\big)$. Note that by symmetry in degrees of $H$ about $d/2$, we can and will assume that $k\le d/2$. Let $C''$ be a sufficiently large constant whose choice depends on $C$. In the following, we first consider values of $j$ such that $j\le n-C''n\kappa/d$. For these $j$ we have $d\sqrt{1-j/n} \ge \sqrt{C''\kappa d} \ge \sqrt{C\kappa k_+}$, since $d=\omega(\log n)$ and $d(1-j/n) \ge C''\kappa$. %
Thus, for $C$ sufficiently large and $k\le d/2$, we have that $\min(1-\alpha,1-k/d)\ge 1/4$ (where we recall that $\alpha = h/(t-1)$).

By Claim~\ref{claim:high-prob}(b), the number of $u\in N(j)\cap [j-1]$ with $|x(u)-k/d|\le  \sqrt{\kappa k_+}/d$ is at most $\frac{dj}{n} \cdot 2\frac{\sqrt{\kappa k_+}}{d} + \left(\kappa+\sqrt{2\kappa \sqrt{\kappa k_+}j/n}\right) \le 4\left(\kappa+\sqrt{\kappa k_+ j/n}\right)$%
. Furthermore, recall that 
\begin{align*}
A_1(j) &= \mathbb{E}_{x(j)}\left[\left(\sum_{u\in N(j), u\le j-1} \delta_{k}(j,u) (\mathbb{I}(x(j) \ge 1-x(u))-x(u))\right)^2\right]\\
&\ll \mathbb{E}_{x(j)}\left[\left(\sum_{u\in N(j), u\le j-1, u\notin \mathcal{U}_j} \delta_{k}(j,u) \right)^2\right] \\
&\qquad \qquad + \mathbb{E}_{x(j)}\left[\left(\sum_{u\in N(j), u\le j-1, u \in \mathcal{U}_j} \delta_{k}(j,u) (\mathbb{I}(x(j) \ge 1-x(u))-x(u)) \right)^2\right].
\end{align*}
If $x(j) < 1 - k/d - \sqrt{\kappa k_+}/d$, then the second term can be bounded by 
\[
\mathbb{E}_{x(j)}\left[\left(\sum_{u\in N(j), u\le j-1, u\in \mathcal{U}_j} \delta_{k}(j,u) (k+\sqrt{\kappa k_+})/d \right)^2\right].
\]
Since $x(j) \ge 1-k/d-\sqrt{\kappa k_+}/d$ occurs with probability at most $k/d+\sqrt{\kappa k_+}/d$, we conclude that
\begin{align}
A_1(j) &\ll \mathbb{E}_{x(j)}\left[\left(\sum_{u\in N(j), u\le j-1, u\notin \mathcal{U}_j} \delta_{k}(j,u) \right)^2\right]  \nonumber\\
&\qquad \qquad + \mathbb{E}_{x(j)}\left[\left(\sum_{u\in N(j), u\le j-1, u\in \mathcal{U}_j} \delta_{k}(j,u)\right)^2\right] \left(\frac{(k+\sqrt{\kappa k_+})^2}{d^2}+\frac{k+\sqrt{\kappa k_+}}{d}\right) \nonumber\\
&\ll n^{-20} + \frac{k+\sqrt{\kappa k_+}}{d}\mathbb{E}_{x(j)}\left[\left(\sum_{u\in N(j), u\le j-1, u\in \mathcal{U}_j} \delta_{k}(j,u)\right)^2\right],\label{eq:A1bound}
\end{align}
where the the bound on the first term follows from Lemma \ref{lem:x-offrange}. In particular, in the following part of the argument, we only need to restrict our attention to the case where $|x-k/d|\le \sqrt{\kappa k_+}/d$ and $
|h/(t-1)-x| \le \max\big(\sqrt{\kappa k_+}/(d\sqrt{1-j/n}),\kappa/(d(1-j/n))\big)$, %
so $\alpha \in [k/d\pm  2\max\big(\sqrt{\kappa k_+}/(d\sqrt{1-j/n}),\kappa/(d(1-j/n))\big)]$.

Condition on $x(v)$ %
with $|x(v)-k/d| > \sqrt{\kappa k_+}/d$. Since the event in Claim~\ref{claim:high-prob}(b) holds, the $x(v)$ with $|x(v)-k/d|>\sqrt{\kappa k_+}/d$ partition $[0,1] \setminus [k/d \pm \sqrt{\kappa k_+}/d]$ into consecutive intervals each of length at most $\kappa/d$, and the union of any $q$ consecutive intervals has length at most $(2q+4\kappa)/d$%
. Indeed, this follows since any interval of length $(2q+4\kappa)/d$ contains at least $(2q+4\kappa)- (\kappa + \sqrt{\kappa (2q+4\kappa)})>q$ elements $x(v)$. 

Since $|x(u)-k/d|\le \sqrt {\kappa k_+/d}$, if $|h/(t-1)-x(u)|\le  \max\big(\sqrt{\kappa k_+}/(d\sqrt{1-j/n}),\kappa/(d(1-j/n))\big)$, then $h/(t-1)$ is contained in an interval centered at $k/d$ of length $$w:= 2\sqrt{\kappa k_+/d} + 2\max\big(\sqrt{\kappa k_+}/(d\sqrt{1-j/n}),\kappa/(d(1-j/n))\big).$$ Thus, $h$ must be contained in a fixed interval of length at most $(t-1)w$, which implies that $u$ is contained in a (fixed) union of at most $(t-1)w$ consecutive intervals in the partition by $x(v)$ with $|x(v)-k/d|>\sqrt{\kappa k_+}/d$. (Note that each possible value of $h$ determines an interval in the partition that $x(u)$ must lie in.) Hence, $x(u)$ must be contained in a fixed interval of length at most %
\[
\Big(4\kappa + 2(t-1)w\Big)/d < 12\kappa/d + 16 \sqrt{\kappa k_+ (1-j/n)}/d,
\]
where we have used the fact that $t<2d(n-j)/n$. The number of such $u$ in $N(j)\cap [j-1]$ is at most $32(\kappa + \sqrt{\kappa k_+(1-j/n)})$. 

Letting $\bar{t} = d(n-j)/n$, we have $t=\Theta(\bar t)$ by Claim~\ref{claim:high-prob}(a), as reasoned in the proof of Lemma~\ref{lem:x-offrange}. By (\ref{eq:stirling}) and recalling that $\min(1-\alpha,1-k/d)\ge 1/4$, we have the bound
\begin{equation}
    \label{eq:deltak-1}
\delta_k(j,u) \ll \frac{3+t \max\big(\sqrt{\kappa k_+}/(d\sqrt{1-j/n}),\kappa/(d(1-j/n))\big)}{(\max(1,\alpha t))^{3/2}}. %
\end{equation}
Note that if $k<4\sqrt{\kappa k_+}/\sqrt{1-j/n}$, then $\bar{t} k/d = k(1-j/n) \ll \kappa^2$. Similarly, if $k< 4\kappa/(1-j/n)$, then $\bar{t}k/d = k(1-j/n) \ll \kappa$. On the other hand, if $k \ge \max\big(4\sqrt{\kappa k_+}/\sqrt{1-j/n},4\kappa/(1-j/n)\big)$, then 
\[
\alpha \ge k/d-\max\big(\sqrt{\kappa k_+}/(d\sqrt{1-j/n}),\kappa/(d(1-j/n))\big) \ge k/d - k/(2d) = k/(2d).
\]
Thus, in both cases, $\max(1,\alpha t)\gg \bar{t} \kappa^{-2} k/d$. Hence,
\begin{align}
    \label{eq:deltak}
\delta_k(j,u) &\ll \kappa^3 \frac{3+\bar{t} \max\big(\sqrt{\kappa k_+}/(d\sqrt{1-j/n}),\kappa/(d(1-j/n))\big)}{(k\bar{t}/d)^{3/2}} \nonumber \\
&\ll \kappa^3 \frac{\kappa+\sqrt{\kappa k_+(1-j/n)}}{(k(1-j/n))^{3/2}}. %
\end{align}
We also have the trivial bound 
\[
\delta_k(j,u)\le 1. 
\]

Hence, we have 
\begin{align}
    &\kappa^{-6}\sum_{j=1}^{n-C''\kappa n/d} A_1(j) - n^{-19}\nonumber\\
    &\ll \sum_{j=1}^{n-C''\kappa n/d}\frac{k+\sqrt{\kappa k_+}}{d}\left((\kappa + \sqrt{\kappa k_+(1-j/n)}) \cdot \min\left(1,\frac{ \kappa+\sqrt{\kappa k_+(1-j/n)}}{(k(1-j/n))^{3/2}}\right)\right)^2 \nonumber \\
    &\ll \frac{k+\sqrt{\kappa k_+}}{d}\sum_{j\in [1,n-C''\kappa n/d]:k(1-j/n)>1}\left((\kappa+\sqrt{\kappa k(1-j/n)}) \cdot \frac{\kappa+\sqrt{\kappa k_+(1-j/n)}}{(k(1-j/n))^{3/2}}\right)^2 \nonumber\\
    &\qquad\qquad\qquad\qquad + \frac{k+\sqrt{\kappa k_+}}{d}\sum_{j\in [1,n-C''\kappa n/d]:k(1-j/n)\le 1}\kappa^2 \nonumber\\
    & \ll \frac{k+\sqrt{\kappa k_+}}{d} \bigg(\kappa^4 \sum_{j<n-n/k}\frac{n^3}{k^3(n-j)^3} + \kappa^3 \sum_{j<n-n/k}\frac{(k+\kappa)n^2}{k^3(n-j)^2}  \nonumber\\
    &\qquad \qquad \qquad \qquad \qquad \qquad + \kappa^3 \sum_{j<n-n/k} \frac{n^2}{k^2(n-j)^2} + \sum_{j<n-n/k}\kappa^2 \frac{(k+\kappa)n}{k^2(n-j)} + \kappa^2 \frac{n}{k}\bigg) \nonumber  \nonumber\\
    &\ll \kappa^4 \frac{k+\kappa}{d}\cdot \frac{n^3}{(n/k)^2k^3} + \kappa^3 \frac{(k+\kappa)^2n^2}{k^3(n/k)d} + \kappa^3 \frac{k+\kappa}{d}\cdot \frac{n^2}{k^2(n/k)}  \nonumber\\
    &\qquad \qquad \qquad \qquad \qquad \qquad + \frac{(\log k)\kappa^2(k+\kappa)^2n}{dk^2} + \kappa^2 \frac{n(k+\kappa)}{dk} \nonumber \\
    &\ll \kappa^5 n/d. \label{eq:mid}
\end{align} 
Here, we note that $k(1-j/n)>1$ if and only if $j<n-n/k$. The first inequality follows from  applying \eqref{eq:A1bound} and \eqref{eq:deltak}. The second inequality follows from breaking the summation into the ranges $j<n-n/k$ and $j\ge n-n/k$ and bounding each separately. Specifically, in the range $j<n-n/k$, we use the bound
\[
\kappa + \sqrt{\kappa k_+(1-j/n)} \ll \kappa + \sqrt{\kappa (\kappa + k)(1-j/n)} \ll \kappa + \sqrt{\kappa k(1-j/n)},
\]
and in the range $j \ge n-n/k$, we use the bound
\[
(\kappa + \sqrt{\kappa k_+(1-j/n)}) \cdot \min\left(1,\frac{3+ \sqrt{(1-j/n)\kappa k_+}}{(k(1-j/n))^{3/2}}\right) \ll \kappa + \sqrt{\kappa k_+(1-j/n)} \ll \kappa + \sqrt{\kappa k(1-j/n)} \ll \kappa.
\]
The third inequality follows from applications of the inequality $(a+b)^2 \ll a^2 + b^2$. The fourth inequality follows from the elementary bounds $\sum_{j<n-n/k}\frac{1}{(n-j)^{s}} \ll (n/k)^{-s+1}$ for $s\ge 2$, and $\sum_{j<n-n/k}\frac{1}{n-j} \ll \log k$, as well as the observation $k+\sqrt{\kappa k_+} \ll k+\kappa$. In the last inequality, we use the bound $(k+\kappa)/k \ll \kappa$. 

Finally, in the range $j\ge n-C''\kappa n/d$, by Lemma \ref{lem:sup-Yi}, we have that $A_1(j) \ll \kappa^2$ and hence 
\begin{equation}\label{eq:largest}
    \sum_{j=n-C''\kappa n/d}^{n} A_1(j) \ll \kappa^2 \cdot \kappa n/d = \kappa^3 n/d.
\end{equation}

Combining (\ref{eq:mid}) and (\ref{eq:largest}), we obtain 
\begin{equation}\label{eq:bound-A1}
    \sum_{j=1}^{n}A_1(j) \ll (\log n)^{11}n/d.
\end{equation}

The control of $$A_2(j) = \mathbb{E}_{x(j),x(u)}\left[\left(\sum_{u\in N(j), u> j-1} \delta_{k}(j,u) (\mathbb{I}(x(j) > 1-x(u))-x(u))\right)^2\right]$$ is very similar, and we omit the details of the argument. One first considers a realization of the random variables $x(u)$ for $u\in N(j),u>j-1$. The same argument used in bounding $A_1$ then allows us to prove the same bound on $A_2$:
\begin{equation}\label{eq:bound-A2}
    \sum_{j=1}^{n}A_2(j) \ll (\log n)^{11}n/d.
\end{equation}

We are now ready to finish the proof of Lemma \ref{lem:var-proxy} and hence the proof of Theorem \ref{thm:main}. 

\begin{proof}[Proof of Lemma \ref{lem:var-proxy}]
By \eqref{eq:Mn} and \eqref{eq:Yjdecomp}, we have 
\[
M_n \ll \sum_{j=1}^{n}A_1(j)+\sum_{j=1}^{n}A_2(j) \ll (\log n)^{11}n/d,
\]
where in the last inequality we have used \eqref{eq:bound-A1} and \eqref{eq:bound-A2}. 
\end{proof}

\end{document}